\documentclass{article}

\usepackage[
	a4paper,
	margin=25mm
]{geometry}
\usepackage{setspace}
\usepackage{sectsty} 
\usepackage{titlesec}

\usepackage{amsmath, amssymb, amsthm}
\usepackage{mathtools}

\usepackage[
	setpagesize=false,
	colorlinks=true,
	linkcolor=blue,
        citecolor=blue,
        urlcolor=black,
	pdfencoding=auto,
	psdextra,
]{hyperref}
\usepackage{cleveref}

\usepackage{etoolbox} 
\usepackage[shortlabels]{enumitem}
\usepackage{xparse} 

\usepackage{graphicx}
\usepackage[dvipsnames]{xcolor}
\usepackage{tikz}

\usepackage{todonotes}
\usepackage[
    deletedmarkup=sout,
    commentmarkup=uwave,
    authormarkuptext=name
]{changes}

\usepackage{comment}

\onehalfspacing

\setlist[enumerate,1]{label=(\arabic*), ref=(\arabic*)}
\setlist[enumerate,3]{label=(\roman*), ref=(\roman*)}

\allsectionsfont{\boldmath} 

\titlelabel{\thetitle.\quad}

\theoremstyle{plain}
\newtheorem{theorem}{Theorem}[section]
\newtheorem{lemma}[theorem]{Lemma}
\newtheorem{corollary}[theorem]{Corollary}
\newtheorem{proposition}[theorem]{Proposition}

\newtheorem{question}[theorem]{Question}

\newtheorem{claim}{Claim}[theorem]
\newtheorem*{claim*}{Claim}

\makeatletter
\newenvironment{claimproof}[1][Proof]{\par
	\pushQED{\qed}%
	
	\normalfont \topsep6\p@\@plus6\p@\relax
	\trivlist
	\item[\hskip\labelsep
	\textit{#1}\@addpunct{.}~]\ignorespaces
}{%
	\popQED\endtrivlist\@endpefalse
}
\makeatother

\newlist{Cases}{enumerate}{3}
\setlist[Cases]{parsep=0pt plus 1pt}
\setlist[Cases,1]{wide=0pt, listparindent=\parindent,
    label = \textbf{Case~\arabic*:}, ref = \arabic*}
\setlist[Cases,2]{wide=\parindent, listparindent=\parindent,
    label = \textbf{Case~\arabic{Casesi}-\arabic{Casesii}:}}

\crefname{Casesi}{case}{cases}

\newcounter{case}
\AtBeginEnvironment{proof}{\setcounter{case}{0}}

\crefname{case}{case}{cases}

\theoremstyle{definition}
\newtheorem{definition}[theorem]{Definition}


\newcommand{\calH}{\mathcal{H}}

\newcommand{\ve}{\varepsilon}


\makeatletter
\NewDocumentCommand{\xsideset}{mmme{_^}}{%
  \mathop{%
    \settowidth{\dimen0}{$\m@th\displaystyle#3$}%
    \dimen0=.5\dimen0
    \settowidth{\dimen2}{$%
      \m@th\displaystyle#3%
      \IfValueT{#4}{_{#4}}%
      \IfValueT{#5}{^{#5}}%
    $}%
    \dimen2=.5\dimen2
    \advance\dimen2 -\dimen0
    \sbox6{\scriptspace\z@$\displaystyle{\vphantom{#3}}#1$}
    \sbox8{\scriptspace\z@$\displaystyle{\vphantom{#3}}#2$}
    \ifdim\wd6>\dimen2 \kern\dimexpr\wd6-\dimen2\relax\fi
    {%
     \mathop{\llap{\copy6}{\displaystyle#3}\rlap{\copy8}}\limits
     \IfValueT{#4}{_{#4}}%
     \IfValueT{#5}{^{#5}}%
    }%
    \ifdim\wd8>\dimen2 \kern\dimexpr\wd8-\dimen2\relax\fi
  }%
}
\makeatother


\let\originalleft\left
\let\originalright\right
\renewcommand{\left}{\mathopen{}\mathclose\bgroup\originalleft}
\renewcommand{\right}{\aftergroup\egroup\originalright}

\makeatletter
\newcases{lrcases*}
  {\quad}
  {$\m@th{##}$\hfil}
  {{##}\hfil}
  {\lbrace}
  {\rbrace}
\makeatother

\usetikzlibrary{matrix,arrows,decorations.pathmorphing}
\usetikzlibrary{positioning}
\usetikzlibrary{graphs} 
\usetikzlibrary{graphs.standard}

\title{Dirac's theorem for linear hypergraphs}
\author{Seonghyuk Im\thanks{Department of Mathematical Sciences, KAIST, South Korea and Extremal Combinatorics and Probability Group (ECOPRO), Institute for Basic Science (IBS). Email:{\tt \{seonghyuk, hyunwoo.lee\}@kaist.ac.kr}}
 \and Hyunwoo Lee\footnotemark[1]}
\date{\today}

\begin{document}
\maketitle

\begin{abstract}
    Dirac's theorem states that any $n$-vertex graph $G$ with even integer $n$ satisfying $\delta(G) \geq n/2$ contains a perfect matching.
    We generalize this to $k$-uniform linear hypergraphs by proving the following. 
    Any $n$-vertex $k$-uniform linear hypergraph $H$ with minimum degree at least $\frac{n}{k} + \Omega(1)$ contains a matching that covers at least $(1-o(1))n$ vertices.
    This minimum degree condition is asymptotically tight and obtaining a perfect matching is impossible with any degree condition.
    Furthermore, we show that if $\delta(H) \geq (\frac{1}{k}+o(1))n$, then $H$ contains almost spanning linear cycles, almost spanning hypertrees with $o(n)$ leaves, and ``long subdivisions'' of any $o(\sqrt{n})$-vertex graphs.
\end{abstract}

\section{Introduction}\label{sec:intro}
Embedding spanning structures in a dense (hyper-)graph is one of the central topics of extremal graph theory. 
A fundamental result by Dirac~\cite{dirac1952some} states that every $n$-vertex graph $G$ with minimum degree at least $n/2$ contains a Hamiltonian cycle, hence a perfect matching if $n$ is even. 
Inspired by Dirac's theorem, the minimum degree conditions for the existence of spanning structures of the graph have been extensively studied.
For example, perfect $K_r$-tiling by Hajnal and Szemer\'{e}di~\cite{hajnal1970proof}, spanning trees and powers of Hamiltonian cycles by Koml\'{o}s, S\'{a}rk\"{o}zy, and Szemer\'{e}di~\cite{komlos1995proof, komlos1998posa,komlos1998proof, komlos2001spanning}, and the bandwidth theorem by B\"{o}ttcher, Schacht and Taraz~\cite{bottcher2009proof}.

It is natural to consider such statements for hypergraphs.
A hypergraph $H$ is called a \emph{$k$-uniform hypergraph} or simply a \emph{$k$-graph} if every edge has exactly $k$ vertices.
Unless we mention otherwise, all the $k$-graphs are simple in this paper.
R\"{o}dl, Ruci\'{n}ski, and Szemere\'{e}di~\cite{rodl2009perfect} proved that for all sufficiently large $n$ that is divisible by $k$, any $n$-vertex $k$-graph hypergraph with minimum $(k-1)$-codegree at least $(1 + o(1))\frac{n}{2}$ contains a perfect matching as well as a tight Hamiltonian cycle. 
Starting from this result, minimum (co-)degree conditions for the existence of various spanning sparse structures in the given hypergraphs were considered. (See excellent surveys in~\cite{kuhn2009embedding,kuhn2014hamilton, zhao2016recent} for such examples.)

Among many spanning structures considered, linear hypergraphs form an interesting class of sparse hypergraphs.
A hypergraph is called \emph{linear} if every pair of edges share at most one vertex.
Linear hypergraphs form a natural family of sparse hypergraphs that includes many interesting hypergraphs; such as perfect matchings, loose cycles, and hypertrees. 
Sufficient conditions for a hypergraph $H$ to contain a specific linear hypergraph have been at the core of attention.
For instance, the minimum (co-)degree condition for the existence of a loose Hamilton cycle was studied by many different groups~\cite{Bus2013, Czygrinow2014, Han2015, Han2010, Keevash2011, Kuhn2006loose} and spanning hypertrees in $3$-graph was considered by Pehova and Petrova~\cite{pehova2023minimum}. (See Section~\ref{subsec:hypertree} for the definition of hypertrees.)

While all these results state that a dense hypergraph $H$ contains a specific spanning linear substructure, it is also natural to consider a sparse hypergraph $H$.
For example, if $H$ itself is a linear hypergraph, then what condition of $H$ guarantees a desired linear hypergraph as a subgraph?
Hence, the following natural question arises.

\begin{question}\label{ques:linear-dirac}
    Let $k \geq 2$ be a positive integer. 
    Let $H$ and $F$ be linear $k$-graphs. 
    What is the minimum degree condition of $H$ that guarantees containment of $F$?
\end{question}

On the other hand, finding a spanning copy of $F$ in a linear hypergraph $H$ is not possible in general even if $F$ is a perfect matching, which is the most simple spanning structure possible.
Even if $H$ is a Steiner triple system, a linear $3$-graph with maximum possible minimum degree, it may not contain a perfect matching with $\lfloor \frac{n}{3} \rfloor$ hyperedges.
Deciding the size of the largest matching guaranteed in any Steiner triple system was a long-standing open problem of Brouwer and was confirmed to be $\lfloor \frac{n-4}{3} \rfloor$ by Montgomery~\cite{montgomery2023proof} very recently.
It is still widely open for higher uniformity.
As finding spanning structures is not possible in general, one can instead consider near-spanning structures.
Dellamonica and R\"{o}dl~\cite{Dellamonica2019} proved that a Steiner triple system contains a loose path on $(1-o(1))n$ vertices.
Elliott and R\"{o}dl~\cite{Elliott2019} proved that any Steiner triple system contains all subdivision trees on $(1-o(1))n$ vertices, which are some special type of hypertrees.
They further conjectured that every hypertree of $(1-o(1))n$ vertices can be found in any Steiner triple system.
This conjecture was proved by the first author, Kim, Lee, and Methuku~\cite{Seonghyuk2022}.
These results show that embedding a near-spanning linear hypergraph into a linear hypergraph $H$ is highly non-trivial even if $H$ is a Steiner triple system, which is a ``complete'' linear $3$-graph.
This makes the search for the minimum degree condition in Question~\ref{ques:linear-dirac} very interesting.

On the other hand, there are natural restrictions for the hypergraphs $F$ in Question~\ref{ques:linear-dirac}.
The recent breakthrough by Kwan, Sah, Sawhney, and Simkim~\cite{kwan2022highgirth} as well as the asymptotic results in~\cite{Bohman2019, Glock2020} show that there exists (almost-)Steiner triple system with arbitrarily large girth.
Even more, \cite{delcourt2022finding, Glock2022conflictfree} proved that there exists ``almost complete'' linear $k$-graphs with large girth.
These results show that one can only consider linear hypergraph $F$ with large girth in Question~\ref{ques:linear-dirac}, so it makes sense for us to focus on the case when $F$ is locally tree-like, such as matchings, hypertrees, loose cycles, and long ``subdivisions''.


\subsection{Main theorems}

Our first main result is the following theorem on the Dirac-type condition for the existence of almost perfect matchings in linear hypergraphs.

\begin{theorem}\label{thm:min_deg_almost_matching}
    For $\ve > 0$ and a positive integer $k\geq 2$, there exist positive integers $C = C(\ve, k)$ and $n_0 = n_0(\ve, k)$ such that the following holds for all $n \geq n_0.$
    If $H$ is an $n$-vertex linear $k$-graph with $\delta(H) \geq \frac{n}{k} + C$, then $H$ contains a matching $M$ that covers at least $(1 - \ve)n$ vertices.
\end{theorem}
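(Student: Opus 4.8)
The plan is to find the large matching by a probabilistic/absorption-free argument based on iterated random sparsification, in the spirit of the Rödl nibble combined with an "augmenting" step that exploits the linearity and the degree condition. First I would observe that linearity gives a uniform upper bound on degrees of vertices in the \emph{link}: for any vertex $v$, its link graph is a simple graph on $n-1$ vertices (each edge through $v$ contributes a $(k-1)$-set, and linearity forces these sets to be pairwise disjoint outside $v$), so $\deg_H(v) \le \frac{n-1}{k-1}$ and, more importantly, any two vertices lie in at most one common edge. Thus $H$ is "almost regular in a controlled range": $\frac{n}{k}+C \le \delta(H) \le \Delta(H) \le \frac{n-1}{k-1}$. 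The key structural consequence I would extract is that $H$ has a near-perfect fractional matching — indeed assigning weight $\frac{1}{\Delta(H)}$ to every edge already almost works — and more usefully, that after deleting any set of $o(n)$ vertices the minimum degree drops by only $o(n)$, so the degree hypothesis is robust under small perturbations.

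The main engine would be the following: take a maximal matching $M$ in $H$ and suppose it covers fewer than $(1-\ve)n$ vertices; let $U$ be the set of at least $\ve n$ uncovered vertices. By maximality every edge of $H$ meets $V(M)$, so in particular $H[U]$ has no edge, i.e.\ $U$ is independent. Now I would run an augmentation argument: because each $u\in U$ has $\delta(H)\ge \frac nk + C$ edges, all of which hit $V(M)$, and because linearity means distinct edges through $u$ hit $V(M)$ in disjoint $k$-sets (minus possibly $u$ itself), the edges of $M$ receive, on average, a large number of "pendant" edges from $U$. A counting/pigeonhole step then produces a short alternating structure — concretely, two uncovered vertices $u_1,u_2\in U$ and two matching edges $e_1,e_2\in M$ together with edges $f_i \ni u_i$ with $f_i$ meeting $e_i$, such that one can locally reroute to cover strictly more vertices (the constant $C$ is exactly what is needed to beat the lower-order losses coming from the $k$ vs.\ $k-1$ discrepancy and from vertices of a single edge being "used up"). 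Iterating this augmentation $O(n)$ times drives the number of uncovered vertices below $\ve n$.

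Alternatively — and this is probably the cleaner route to write down — I would pass to an auxiliary bipartite "inclusion" setup: build a bipartite graph between $U$ and $M$ where $u\sim e$ if some edge of $H$ contains $u$ and meets $e$; the degree condition plus linearity force this bipartite graph to be dense enough that a defect-Hall / König-type argument yields many edges of $M$ each of which can be replaced by two edges covering more of $U$, and a disjointness argument (again from linearity) lets us perform $\Omega(|U|)$ of these replacements simultaneously. One then checks that the net gain is positive as long as $|U|\ge \ve n$, and the required slack is an additive constant $C=C(\ve,k)$, giving the asymptotically tight bound $\frac nk + C$ advertised in the statement.

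The hard part will be making the augmentation simultaneous rather than one-at-a-time: a single reroute is easy, but to get from "some improvement exists" to "$(1-\ve)n$ vertices covered" I need either a clean monovariant that strictly decreases under each local move (and terminates before $|U|$ can grow back), or a global argument producing $\Omega(|U|)$ vertex-disjoint augmenting gadgets at once. Controlling the interaction of these gadgets — ensuring the $k$-sets they use inside $V(M)$ do not overlap, which is where linearity is doing real work, and ensuring the additive constant $C$ (not a multiplicative $(1+o(1))$ factor) suffices to absorb the loss of up to $k-1$ "wasted" vertices per modified edge — is the crux; I expect this to require a careful double-counting of the pairs $(u,f)$ with $u\in U$, $f\ni u$, weighted by how many other uncovered vertices the edge $f$'s $M$-partner can reach.
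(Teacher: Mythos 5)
There is a genuine gap: the entire content of the theorem is the claim that a matching covering fewer than $(1-\ve)n$ vertices admits an augmentation, and your proposal neither proves this nor sets up a gadget that could work. The two-for-two swap you describe (replace $e_1,e_2\in M$ by $f_1,f_2$) never increases coverage: a matching with $m$ edges covers exactly $km$ vertices, so you must replace $j$ matching edges by $j+1$ new edges. For the basic $1\to 2$ augmentation you would need, for some $e\in M$, two disjoint edges $f_1,f_2$ each meeting $V(M)$ only inside $e$ --- and the degree condition does not obviously supply these. Indeed, when $|U|=\ve n$ an uncovered vertex $u$ has $|N_H(u)|\geq (k-1)(\tfrac{n}{k}+C)=n-\tfrac{n}{k}+O(1)$, which already exceeds $|U|$ for $k\geq 2$, so for a typical edge $f\ni u$ \emph{most} of its $k-1$ other vertices lie in $V(M)$, generally scattered over several matching edges and hence useless for a local reroute. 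Your fallback remark about fractional matchings is also incorrect as stated: assigning weight $1/\Delta(H)$ to every edge gives each vertex $v$ total weight $d_H(v)/\Delta(H)$, which can be as small as roughly $\frac{n/k}{n/(k-1)}=\frac{k-1}{k}$, so this is a fractional matching missing a constant fraction of every vertex, not a near-perfect one. Finally, you have not explained how an augmenting configuration, even granted to exist, survives the constraint that the $k$-sets used inside $V(M)$ be controlled --- you correctly flag this as the crux, but flagging it does not close it.

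For comparison, the paper takes a completely different route in which no augmentation appears. It first proves (Theorem~\ref{thm:fractional_matching}) that $\delta(H)\geq n/k$ alone forces a \emph{perfect} fractional matching; this is done via Farkas' lemma, where linearity enters through the inequality $x_1+2x_2+\cdots+(k-1)x_{k-1}\leq |X|$ bounding the neighborhood of a minimizing dual vertex. The constant $C$ is then spent not on augmentation losses but on robustness: one needs a perfect fractional matching in $H\setminus F$ for every bounded-degree $F$, which feeds into Lemma~\ref{lem:make-pseudorandom} to produce a pseudorandom fractional matching, hence a $D$-regular multi-hypergraph with small codegrees (Corollary~\ref{cor:regularization}); the Pippenger--Spencer theorem then extracts the almost perfect matching. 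If you want to salvage your approach, the honest assessment is that you would be reproving a Dirac-type matching theorem for linear hypergraphs by local switching, and you would need to identify and analyze a much richer family of augmenting structures than the one you wrote down.
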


We note that the case $k = 2$ follows from Dirac's theorem. 
Indeed, by applying the following proposition with $m=\varepsilon n$, one can see that the minimum degree condition in \Cref{thm:min_deg_almost_matching} is asymptotically best possible for all $k \geq 2$. 

\begin{proposition}\label{prop:lower-bound}
    Let $n, m, k$ be positive integers with $m < \frac{n}{k}$ and $n$ is sufficiently large.
    Then there exist infinitely many $n$-vertex linear $k$-graph $H$ such that the following holds.
    \begin{itemize}
        \item[$(1)$] $\delta(H) > \frac{n}{k} - m -2$,
        \item[$(2)$] there is no matching covers more than $n - km$ vertices. 
    \end{itemize}
\end{proposition}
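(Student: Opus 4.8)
The plan is to construct $H$ as a disjoint-ish union of two pieces: a small ``bad'' piece that forces many vertices to be uncovered, and a large ``generic'' piece on which every vertex already has almost full degree, so that adding a few extra edges through the bad piece lifts the minimum degree above $\frac{n}{k}-m-2$. Concretely, set aside a set $B$ of roughly $km$ vertices that will be the uncoverable vertices, and put the remaining $n - \Theta(km)$ vertices into a set $A$. On $A$ I would place a ``complete-like'' linear $k$-graph — for instance (when $k=3$) a Steiner triple system on $|A|$ vertices, and in general a near-optimal linear $k$-graph obtained from the results on high-girth designs cited in the introduction, or more elementarily an affine/projective-plane-based construction — which already has minimum degree about $\frac{|A|-1}{k-1} \ge \frac{n}{k}$. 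The point of making $A$ this dense is that its vertices automatically satisfy the degree bound with room to spare, so I only need to worry about the degrees of vertices in $B$.

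Next I would attach $B$ to $A$ so that (i) $B$ meets few enough edges that no matching can cover more than $n - km$ vertices, yet (ii) every vertex of $B$ still has degree $> \frac{n}{k} - m - 2$. The mechanism for (i): arrange that all edges incident to $B$ pass through a single fixed vertex $v_0 \in B$ (or a small fixed set), so that any matching uses at most one edge touching $B$, hence covers at most $k-1$ vertices of $B$ and leaves $\ge |B| - (k-1) \ge km - k + 1 > km - 1$ vertices of $B$ uncovered; adjusting constants gives ``no matching covers more than $n-km$ vertices.'' For (ii): give each $b \in B$ its required $\approx \frac{n}{k} - m$ edges by routing them through $v_0$ into disjoint $(k-2)$-subsets of $A$ — this is possible as long as $A$ is large enough to supply the needed disjoint subsets, and linearity is preserved because any two such edges share only $v_0$. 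One must also re-check linearity between the $B$-edges and the dense $A$-structure; since the $B$-edges only occupy $(k-2)$ vertices of $A$ at a time and we have freedom in choosing them, we can pick them to avoid creating a second common vertex with any $A$-edge, or alternatively delete the $o(n)$ offending $A$-edges, which costs only $o(1)$ in the degree of $A$-vertices and so does not violate their bound.

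To get infinitely many examples and to make $\delta(H)$ hit exactly the target, I would work with an infinite family of admissible $|A|$ (e.g.\ prime powers, or the density sequence along which the design constructions exist) and, if the dense structure on $A$ overshoots the degree bound by too much, simply delete a controlled number of edges of $A$ to bring the minimum degree of $A$-vertices down to just above $\frac{n}{k}-m-2$ while keeping it a linear hypergraph; since we only need a lower bound on $\delta(H)$, mild non-regularity is harmless. The two items of the proposition are then immediate from the construction: (1) holds vertex-by-vertex (vertices of $A$ by density minus the small deletions, vertices of $B$ by the routed edges), and (2) holds because every matching uses $\le 1$ edge through the $B$-bottleneck.

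The main obstacle I expect is not the matching bound (that is a clean bottleneck argument) but \emph{simultaneously} guaranteeing that the degrees of the $B$-vertices reach $\frac{n}{k}-m$ while \emph{all} edges through $B$ still funnel through a single vertex (or tiny set) — there is a tension, since a vertex $b$ of degree $\approx n/k$ incident only to edges containing $v_0$ forces $v_0$ to have degree $\ge n/k$ as well, and all those edges must be pairwise linear, i.e.\ their ``petals'' in $A \cup (B \setminus \{v_0\})$ must be pairwise disjoint except at $v_0$; this requires $|A|$ to be comfortably larger than $k \cdot \frac{n}{k} = n$ worth of petal-slots counted with multiplicity, so one has to check the arithmetic $|B| \cdot \frac{n}{k} \cdot (k-2) = O(m n) \le |A|^{?}$ carefully — but since $m < n/k$ this is $O(n^2/k)$ petal-incidences spread over $|A| \approx n$ vertices, which is fine as long as each $A$-vertex can appear in $O(n)$ petals, consistent with its degree. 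Getting these counting constraints to close (and choosing the $-2$ slack in the exponent of the statement to absorb rounding) is the delicate part; everything else is routine.
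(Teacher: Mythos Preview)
Your bottleneck construction breaks down on the linearity constraint itself, not on the counting you flag at the end. If every edge incident to $b\in B\setminus\{v_0\}$ is required to also contain $v_0$, then any two such edges share the pair $\{b,v_0\}$, which is forbidden in a linear hypergraph. In other words, the codegree of $b$ and $v_0$ is at most~$1$, so $d_H(b)\le 1$---orders of magnitude below the required $\frac{n}{k}-m-2$. Replacing $v_0$ by a ``small fixed set'' $S$ does not help: the same codegree bound gives $d_H(b)\le |S|$, so you would need $|S|\gtrsim \frac{n}{k}-m$, and then a matching may use up to $|S|$ edges meeting $S$, destroying the bound in (2). The tension you anticipated is not an arithmetic issue but a structural impossibility: in a \emph{linear} $k$-graph you cannot funnel high degree through a small vertex-cut.

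The paper avoids this by using a \emph{partite} bottleneck rather than a vertex bottleneck. It takes a balanced ``complete'' $k$-partite linear $k$-graph on parts of size $n'$ (built from $k-2$ mutually orthogonal Latin squares of order $n'$, so every transversal $k$-tuple is an edge and every vertex has degree $n'$) and deletes $\lceil \tfrac{km}{k-1}\rceil$ vertices from one part. Since every edge is a transversal, the matching size is capped by the size of the smallest part, giving (2), while every surviving vertex still has degree $n'-\lceil \tfrac{km}{k-1}\rceil>\frac{n}{k}-m-2$, giving (1). Note how this sidesteps the linearity obstruction: high degree is achieved by spreading edges across an entire part of size $\approx n/k$, not by concentrating them through a single vertex.
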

\begin{proof}
    Let $n'$ be the largest integer such that $n \leq kn' - \frac{km}{k-1}$. 
    By~\cite{Chowla1960}, there is a family of $k-2$ pairwise orthogonal Latin squares of order $n'$, thus the complete $k$-partite graph $K_{n', n', \ldots, n'}$ can be decomposed into copies of $K_k$'s.
    By considering each clique in the decomposition as hyperedges, one obtains ``complete'' $k$-partite linear $k$-graph.
    Let $H$ be the linear $k$-graph obtained by deleting $\lceil\frac{km}{k-1}\rceil$ vertices of one single part from this complete $k$-partite linear $k$-graph.
    Then $H$ is an $n$-vertex linear $k$-graph with $\delta(H) = n' - \lceil\frac{km}{k-1}\rceil \geq n' - \frac{km}{k-1} -1 > \frac{n}{k} - m-2$ and $H$ has no matching covers more than $k(n'-\lceil\frac{km}{k-1}\rceil) \leq n - km$ vertices.
\end{proof}
We note that this example is obtained from a balanced ``complete'' $k$-partite linear $k$-graph by deleting vertices from one part.

In fact, we prove the stronger statement regarding almost spanning $T$-tilings for hypertrees $T$ of bounded size. Again, see \Cref{subsec:hypertree} for the definition of hypertrees.

\begin{theorem}\label{thm:tree-tiling}
    For given $\ve > 0$, a positive integer $k \geq 2$ and a linear $k$-hypertree $T$, there exist positive real numbers $C = C(\ve, T)$ and $n_0 = n_0(\ve, T)$ such that the following holds for all $n \geq n_0.$
    If $H$ is an $n$-vertex linear $k$-graph with $\delta(H) \geq \frac{n}{k} + C$, then $H$ contains a collection of vertex-disjoint copies of $T$ that covers at least $(1 - \ve)n$ vertices.
\end{theorem}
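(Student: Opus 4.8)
The plan is to prove \Cref{thm:tree-tiling} directly; taking $T$ to be a single edge recovers \Cref{thm:min_deg_almost_matching}. Write $t$ for the number of edges of $T$, so a copy of $T$ spans $(k-1)t+1$ vertices; it suffices to prove the statement with the hypothesis constant $C$ taken large compared with $|V(T)|$ and with $1/\eta$, where $0<\eta\ll\varepsilon$ is an auxiliary constant. Start from a maximum collection $\mathcal{T}$ of vertex-disjoint copies of $T$ in $H$, and let $U=V(H)\setminus V(\mathcal{T})$; by maximality $H[U]$ contains no copy of $T$. The whole game is to show $|U|<\varepsilon n$, and the first observation that drives everything is that linearity makes $H[U]$ dense from $U$'s own point of view: since each vertex $w$ has $\deg_H(w)\ge n/k+C$ pairwise almost-disjoint edges and, again by linearity, every vertex of $V(H)\setminus U$ kills at most one edge through $w$, the number of edges through $w$ lying entirely in $U$ is at least $|U|-\tfrac{k-1}{k}n+C$. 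Hence once $|U|\ge\tfrac{k-1}{k}n$ the hypergraph $H[U]$ has minimum degree $\ge C$, and greedily growing a pendant edge at a time (using $C\gg|V(T)|$ so that the already-used vertices block fewer than $\deg(w)$ edges at each step) embeds a copy of $T$ in $H[U]$, a contradiction. This already yields a $T$-tiling covering more than $n/k$ vertices for free; the work is to push past the $\tfrac{k-1}{k}n$ barrier, where this bound degrades and genuinely extremal configurations appear.

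To go further I would run a dichotomy. Either $\mathcal{T}$ can be \emph{reconfigured} --- one replaces a bounded sub-collection of $\mathcal{T}$ by strictly more copies of $T$ spanning the same vertices together with some vertices of $U$, increasing the number of covered vertices and contradicting maximality --- or no such local improvement exists, and I claim this rigidity forces $H$ to be $\eta$-close to a balanced complete $k$-partite linear $k$-graph. Concretely, if $|U|\ge\varepsilon n$ and $H[U]$ is $T$-free with no reconfiguration available, then $U$ is an independent set that is moreover ``aligned'' with the neighbourhoods of the vertices covered by $\mathcal{T}$: propagating the constraint $\deg_H(w)\ge n/k+C$ through the edges meeting $U$ should show that, after deleting $O(\eta n)$ exceptional vertices, $V(H)$ partitions into $k$ independent sets each of size $(1\pm o(1))n/k$, with essentially every transversal pair covered. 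The minimum-degree slack $C$ is precisely what excludes the unbalanced configurations of \Cref{prop:lower-bound}: a part of size noticeably below $n/k$ would drag the degree of some vertex below $n/k$.

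It then remains to prove the theorem for this near-extremal $H$, a near-complete $k$-partite linear $k$-graph on balanced parts. As in the proof of \Cref{prop:lower-bound}, a balanced complete $k$-partite linear $k$-graph is exactly the incidence structure of $k-2$ mutually orthogonal Latin squares, and inside such a design a near-perfect $T$-tiling exists by a ``transversal''-type decomposition in the spirit of recent work on partial transversals of Latin squares~\cite{montgomery2023proof}; the $T$-copies are taken in several rotated orientations so that they cover the $k$ parts in balanced proportions, and the $O(\eta n)$ missing edges and exceptional vertices are absorbed using the spare degree $C$ together with a short absorbing gadget reserved at the outset. Combining the two cases gives a $T$-tiling covering at least $(1-\varepsilon)n$ vertices. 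The hard part, I expect, is the middle step --- converting ``no bounded reconfiguration of a maximum $T$-tiling'' into genuine $k$-partite structure for all of $H$ --- together with running the design-theoretic endgame with only a \emph{constant} amount of degree slack, rather than the $\Omega(n)$ slack an absorption argument would usually demand.
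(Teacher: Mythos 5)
There is a genuine gap: everything after your first observation is asserted rather than proved, and the two assertions carry essentially all of the difficulty. Your opening step is fine --- by linearity each vertex of $V(H)\setminus U$ lies in at most one edge through $w\in U$, so $d_{H[U]}(w)\ge |U|-\tfrac{k-1}{k}n+C$, and a greedy embedding (in the spirit of \Cref{lem:greedy}) gives a copy of $T$ in $H[U]$ once $|U|\ge\tfrac{k-1}{k}n$ --- but this only guarantees a tiling covering $n/k$ vertices. The dichotomy you then invoke is not an argument: ``no bounded reconfiguration of a maximum $T$-tiling'' forcing $H$ to be close to a balanced complete $k$-partite linear $k$-graph is a full-strength stability theorem whose truth is unclear. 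In particular, $H[U]$ being $T$-free does not make $U$ independent for general $T$, and ``propagating the constraint $\deta(w)\ge n/k+C$'' is not specified; note also that the balanced complete $k$-partite linear $k$-graph itself has minimum degree exactly $n/k$ and so fails the hypothesis, which means your extremal object would have to carry extra structure you have not described. The endgame is equally unsupported: a near-perfect $T$-tiling of a near-complete balanced $k$-partite linear $k$-graph for an \emph{arbitrary} bounded hypertree $T$ does not follow from results on partial transversals of Latin squares (\cite{montgomery2023proof} concerns $k=3$ and matchings), and you yourself flag that running an absorption-style argument with only constant degree slack is problematic. As written, the proposal reduces the theorem to two unproved statements, each at least as hard as the theorem.

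For contrast, the paper avoids maximum tilings and stability entirely. It first shows via Farkas' lemma (\Cref{lem:farkas}) that $\delta(H)\ge n/k$ already forces a perfect \emph{fractional} matching (\Cref{thm:fractional_matching}); the slack $C$ is used only to keep this true after deleting any bounded-degree subgraph, which by \Cref{lem:make-pseudorandom} upgrades the fractional matching to a pseudorandom one and hence to a $D$-regular multi-subhypergraph with small codegrees (\Cref{cor:regularization}). One then builds the auxiliary $t$-uniform hypergraph whose edges are the labeled copies of $T$ in this regular subhypergraph, checks it is pseudorandom, and applies Pippenger--Spencer (\Cref{thm:pippenger_spencer}) to extract an almost-perfect $T$-tiling. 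Ironically, even the extremal case of your dichotomy (a near-complete $k$-partite design) would most naturally be handled by exactly this nibble argument, so the stability detour buys nothing. If you want to salvage your approach, the honest statement of what you must prove is the stability lemma in your second paragraph; I would instead recommend looking for a fractional relaxation of the tiling problem, which is where the degree hypothesis $\tfrac{n}{k}+C$ actually does its work.
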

We note that \Cref{thm:min_deg_almost_matching} is a corollary of \Cref{thm:tree-tiling}.

By combining \Cref{thm:tree-tiling} and the standard reservoir technique, we show that one can embed almost spanning ``path-like'' structures including linear cycles. 
To state this result, we need the following definitions.

We say that an edge $e$ of a linear hypertree $T$ is a \emph{leaf-edge} if all but at most one vertex in $e$ has degree one.
For a $(2-)$graph $G$, a \emph{$k$-uniform subdivision} of $G$ is a linear $k$-graph $F$ obtained by the following.
For each vertex of $G$, there is a corresponding vertex in $F$, called the \emph{center} vertices.
For each edge $uv$ of $G$, we add a linear path connecting the vertices corresponding to $u$ and $v$ in $F$ where internal vertices are disjoint from the other part of the hypergraph.
The resulting graph $F$ is called a $k$-uniform subdivision of $G$.
For instance, a linear cycle is a $k$-uniform subdivision of $K_3$.

\begin{theorem}\label{thm:min_deg_almost_subdivision}
    For $\ve > 0$ and a positive integer $k \geq 2$, there exist positive real numbers $\mu = \mu(\ve, k)$, and $n_0 = n_0(\ve, k)$ such that the following holds for all $n \geq n_0.$ 
    Let $H$ be an $n$-vertex $k$-graph with $\delta(H) \geq \frac{n}{k} + \ve n$ and $F$ is a $k$-graph on at most $(1 - \ve)n$ vertices which is either
    \begin{itemize}
        \item a linear $k$-hypertree with at most $\mu n$ leaf edges or 
        \item a $k$-uniform subdivision of a graph $G$ with at most $\mu \sqrt{n}$ vertices where each edge of $G$ is replaced by a path of length at least two.
    \end{itemize}
    Then $H$ contains $F.$
\end{theorem}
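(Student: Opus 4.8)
The plan is to combine the tree--tiling of \Cref{thm:tree-tiling}, which produces the ``bulk'' of $F$, with a connecting lemma for linear $k$-graphs of large minimum degree, which wires the bulk together into the exact shape of $F$. The case $k=2$ is immediate: $\delta(H)\ge n/2+\ve n$ yields a Hamiltonian cycle by Dirac's theorem, and a Hamiltonian cycle contains every $F$ on at most $(1-\ve)n$ vertices; so assume $k\ge 3$ and fix a hierarchy $\ve\gg\gamma\gg 1/L\gg\mu>0$ with $L\in\NN$. The first observation is that both kinds of target $F$ have the same coarse structure: a bounded--complexity ``skeleton'' $F_0\subseteq F$, together with long ``bare'' linear paths joining prescribed vertices of $F_0$ and, in the hypertree case, some pendant leaf edges. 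For a hypertree with at most $\mu n$ leaf edges there are $O(\mu n)$ branch vertices, and contracting the bare paths leaves a tree with $O(\mu n)$ edges, so $|V(F_0)|=O(\mu n)$; for a subdivision of $G$ with $|V(G)|\le\mu\sqrt n$ there are $e(G)\le\binom{\mu\sqrt n}{2}$ bare paths, so $|V(F_0)|$ and the number of bare paths are $O(\mu^2 n)$. In both cases we also put into $F_0$ every bare path shorter than a suitable constant $\ell_0$; this adds only $o(\ve n)$ vertices.

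The construction then runs as follows. First, reserve a uniformly random set $S\subseteq V(H)$ with $|S|=\gamma n$: a routine first--moment plus concentration argument shows that with high probability every $v\in V(H)$ is contained in at least $c_1 n$ edges whose other $k-1$ vertices all lie in $S$, where $c_1=c_1(\gamma,k)\approx\gamma^{k-1}/(2k)$. Second, embed $F_0$ into $H-S$ greedily, edge by edge along its forest structure (including the short bare paths and leaf edges, routed freely since only $O(\mu n)\ll\delta(H)$ vertices are ever occupied). Third, apply \Cref{thm:tree-tiling} to $H':=H-(S\cup V(F_0))$ with target tree $P_L$ (the linear path with $L$ edges) and error $\ve/3$: since $\delta(H')\ge n/k+\ve n-(\gamma+o(1))n\ge |V(H')|/k+C(\ve/3,P_L)$, we obtain a collection $\calP$ of vertex--disjoint copies of $P_L$ covering at least $(1-\ve/3)|V(H')|\ge(1-\ve)n$ vertices. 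Fourth, assemble $F$: for each long bare path of $F$, of length $\ell\ge\ell_0$, fix a plan $[\text{connector}][\text{chunk}][\text{connector}][\text{chunk}]\cdots[\text{chunk}][\text{connector}]$ with $m\approx\ell/L$ chunks, where the first and last connectors run from the two relevant (already embedded) vertices of $F_0$, and where the number $m$ and the connector lengths are chosen so that the total length is exactly $\ell$ --- a chunk can be any length in $\{1,\dots,L\}$, supplied by a prefix of an unused copy of $P_L\in\calP$ (entering such a copy at an interior vertex just splits off a shorter unused piece), and a connector can be any length in a fixed range. Realise the chunks from $\calP$, and realise each connector, and each leaf edge hanging off its attachment vertex, by a bounded--length linear path whose interior vertices all lie in the currently unused part of $S$. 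Because $H$ is linear, every subgraph of $H$ is linear, so the union of $F_0$, the chunks and the connectors is automatically a linear hypergraph; by construction it is a copy of $F$.

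The connectors are supplied by a connecting lemma. At every stage the number of vertices of $S$ already used --- over all connectors and leaf edges --- is $O(k)\cdot O(n/L+\mu n)$, which is below $c_1 n/2$ once $L$ is large and $\mu$ small; hence, by the reservoir property, every vertex still has at least $c_1 n/2$ edges into the unused part $S'$ of $S$, so from each of the two vertices to be joined one reaches $S'$ by a single fresh edge, after which it suffices to join two vertices inside $S'$ by a linear path through $S'$ of prescribed (bounded) length. The length bookkeeping then goes through because a connector can be lengthened simply by inserting more interior vertices of $S'$, so the $\pm L$ slack per bare path --- totalling $O(\mu n)\cdot L=o(\ve n)$ in the hypertree case and $O(\mu^2 n)\cdot L=o(\ve n)$ in the subdivision case --- is absorbed; the only bare paths too short to carry a connector are exactly the ones already placed in $F_0$.

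The crux --- and the step I expect to be the main obstacle --- is precisely this connecting lemma inside the reservoir: that for a random $S$ of size $\gamma n$, the hypergraph induced on $S$ with any $\le c_1 n/2$ of its vertices deleted is ``routing--connected'', i.e.\ any two of its vertices are joined by a linear path of length bounded in terms of $\ve$ and $k$ all of whose interior vertices lie in $S$. In contrast to the dense setting this is not a consequence of a minimum--degree condition on $H[S]$, which has minimum degree only about $\gamma^{k-1}\delta(H)$ --- a tiny fraction of $|S|$ once $k\ge 3$ --- so one must instead use that $S$ is random together with the fact that $H$ has no sparse cuts, itself a consequence of $\delta(H)\ge n/k+\ve n$ forcing every vertex of $H$ to be joined by an edge to more than $n/2$ other vertices. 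Making the quantitative dependencies fit --- $\gamma<\ve$ so that $S$ does not eat into the tiling, but $1/L,\mu\ll\gamma^{k-1}$ so that the total reservoir usage stays below $c_1 n/2$ --- is the source of the constant hierarchy $\ve\gg\gamma\gg 1/L\gg\mu$.
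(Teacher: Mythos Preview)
Your overall architecture --- reserve a random set, embed a small skeleton greedily, tile the remainder with bounded paths via \Cref{thm:tree-tiling}, then stitch everything together through the reservoir --- is exactly the paper's plan. The difference lies entirely in the connecting step, and here you have made the problem harder than it is.

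You try to route \emph{inside} $H[S']$, observe that $\delta(H[S])\approx\gamma^{k-1}\delta(H)$ is far too small for a minimum-degree argument, and are left with an unproved ``routing-connected'' property that you flag as the main obstacle. The paper sidesteps this completely. The observation you already made --- that $|N_H(v)|\ge (k-1)\delta(H)>n/2$ for every $v$ --- directly gives, for any pair $u,v$ and after deleting any $o(\ve n)$ vertices, an edge through $u$ and an edge through $v$ sharing a common vertex; iterating yields $\Omega(\ve n)$ internally \emph{disjoint} $u$--$v$ paths of length exactly $2$ in the full graph $H$ (this is the paper's \Cref{lem:resorvoir}). Now each such path has only $2k-3$ internal vertices, and the paths are internally disjoint, so the events ``all internal vertices lie in $S$'' are independent; Chernoff then gives, for every pair simultaneously, $\Omega(p^{2k-3}\ve n)$ length-$2$ $u$--$v$ paths whose interiors lie in the reservoir. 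No routing inside $H[S]$ is ever needed.

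This also simplifies your bookkeeping: all connectors have length exactly $2$, so there is no need for variable-length connectors or for ``lengthening by inserting more interior vertices of $S'$''. The paper absorbs the length arithmetic on the tiling side instead: choose the bare-path length $M$ and the tiling-path length $m$ so that $m+2\mid M-2$, and group $(M-2)/(m+2)$ tiled paths of length $m$, joined by length-$2$ connectors, plus two more length-$2$ connectors at the ends, to realise each bare path of length $M$ exactly. With this one replacement your sketch becomes a complete proof.
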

We note that by the previous observation, \Cref{thm:min_deg_almost_subdivision} provides the existence of an almost spanning linear cycle.

In fact, considering only the hypergraphs with not too many leaves is necessary as the statement is not true in general for the hypertrees with many leaves.
For example, if $F$ is an almost spanning star, which is a hypertree with a vertex belonging to all edges, then we need minimum degree of $H$ to be $(\frac{1}{k-1}-o(1))n$ to be able to find $F$ in $H$.
Similarly, it is not difficult to construct more almost spanning hypertrees with many leaves and small diameters that do not embed into the aforementioned ``complete'' $k$-partite linear $k$-graph mentioned in the proof of \Cref{prop:lower-bound}.

In order to prove \Cref{thm:tree-tiling}, we first determine the minimum degree threshold for the existence of a perfect fractional matching in Section~\ref{sec:regularization}, which is an LP relaxation of perfect matching, in a linear hypergraph.
Then we show that using a regularization lemma (\Cref{lem:make-pseudorandom}) and applying the Pippenger-Spencer theorem (\Cref{thm:pippenger_spencer}) on an appropriate auxiliary graph, we find an almost $T$-tiling in Section~\ref{sec:tree-tiling}.
To prove \Cref{thm:min_deg_almost_subdivision}, we additionally use a lemma on the structure of hypertree (\Cref{lem:leaf_path_hypertree}) and connection technique in Section~\ref{sec:subdivison}.
Concluding remarks are given in Section~\ref{sec:concluding_remarks}.


\section{Prelimilaries}

\subsection{General notations}
For a $k$-graph $H$ and a set $S \subseteq V(H)$ let $d_H(S)$ be the \emph{codegree} of the set $S$ in $H$, which is the number of edges containing $S$.
When $S = \{v\}$, we simply denote $d_H(\{v\})$ by $d_H(v)$ and call it be the \emph{degree} of the vertex $v$.
We denote by $\Delta(H)$ and $\delta(H)$ as the maximum degree and minimum degree of $H$, respectively.
Similarly, let $\Delta_2(H)$ be the maximum ($2-$)codegree is defined by maximum codegree $d_H(S)$ among all size $2$ sets $S \subseteq V(H)$.
For a vertex set $X \subseteq V(H)$ and a vertex $v \in V(H)$, we denote by $d_H(v; X)$ the number of edges that are incident to $v$ with all other vertices in $X$. 
For a vertex $v \in V(H)$, let $N_H(v)$ be the set of vertices $u$ such that $\{u, v\}$ is contained in an edge of $H$, called the \emph{neighborhood} of $v$.
When $H$ is a multi-hypergraph, we denote by $H_{simp}$ the simplification of $H$, that is to replace all parallel edges with one edge. 
For an edge $e\in E(H)$, we write $\mu_{H}(e)$ for the multiplicity of the edge $e$ in $H$.

We use $\ll$ to denote the hierarchy between constants. 
If we state that a statement holds when $0<a \ll b, c \ll d$, it means there exist functions $g_1, g_2:(0, 1] \rightarrow (0, 1]$ and $f:(0, 1]^2 \rightarrow (0, 1]$ such that the statement holds whenever $0<b \leq g_1(d)$, $0<c \leq g_2(d)$, and $0<a \leq f(b, c)$.
We do not explicitly determine these functions.

\subsection{Matchings in pseudorandom hypergraphs}
As conditions on degrees and codegrees will frequently appear in later sections, we define the following terminologies to simplify the notation.
A multi-hypergraph $H$ is called \emph{$(D, \tau, \delta)$-pseudorandom} if for all $v\in V(H)$, we have $(1 - \tau)D \leq d_H(v) \leq (1 + \tau)D$ and $\Delta_2(H) \leq \delta D$.

\begin{theorem}[Pippenger-Spencer \cite{Pippenger89}]\label{thm:pippenger_spencer}
    For every $k \geq 2$ and $\delta>0$, there exist $\delta_0 = \delta_0(\delta, k)>0$ and $n_0 = n_0(\delta, k)$ such that if $H$ is a multi-$k$-graph with at least $n_0$ vertices and $(D, \delta_0, \delta_0)$-pseudorandom, then $H$ can be properly edge-colored with $(1+\delta)D$ many colors. 
    In particular, $H$ has a matching that covers all but at most $2k\delta |V(H)|$ vertices.
\end{theorem}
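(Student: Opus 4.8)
This is a classical result of Pippenger and Spencer~\cite{Pippenger89}, proved by the semi-random (R\"odl nibble) method, and the plan is to recall that argument. I would first establish the edge-colouring assertion and then deduce the ``in particular'' clause by a pigeonhole count: a $(D,\delta_0,\delta_0)$-pseudorandom $n$-vertex $k$-graph has at least $(1-\delta_0)Dn/k$ edges, so if these are partitioned into $(1+\delta)D$ colour classes (each a matching), the largest class has at least $\frac{(1-\delta_0)n}{k(1+\delta)}$ edges and hence covers at least $\frac{(1-\delta_0)n}{1+\delta}\ge(1-2k\delta)n$ vertices, provided $\delta_0$ is chosen small relative to $\delta$ and $k$.

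For the edge-colouring, fix a palette of $(1+\delta)D$ colours and a small ``bite size'' $p=p(\delta,k)\in(0,1)$, and build the colouring in $t=t(\delta,k)$ rounds. Entering round $i$ we have a set $E_i$ of still-uncoloured edges, spanning a subhypergraph $H_i$, and for each $e\in E_i$ a \emph{palette} $A_e^{(i)}$ of colours not used on any already-coloured edge meeting $e$; initially $E_0=E(H)$ and $A_e^{(0)}$ is the full palette. In round $i+1$ we independently \emph{activate} each $e\in E_i$ with probability $p$, give each activated edge a colour drawn uniformly at random from its current palette, and permanently colour $e$ with that colour unless some other activated edge meeting $e$ chose the same colour (in which case $e$ stays uncoloured). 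The technical heart is to show that, with positive probability, the relevant random quantities follow deterministic trajectories: there are sequences $D_i$ with $D_0=D$, $D_{i+1}/D_i$ bounded away from $1$ and $D_i\to0$, and $Q_i$ with $Q_0=(1+\delta)D$ and $Q_i$ bounded below by a positive multiple of $D$ for all $i\le t$, such that after round $i$ one has $d_{H_i}(v)=(1\pm o(1))D_i$ for every vertex $v$ and $|A_e^{(i)}|=(1\pm o(1))Q_i$ for every $e\in E_i$ — concretely one tracks, for each edge--colour pair, whether the colour is still available, shows these events are essentially equiprobable, and thereby argues each palette behaves like a random subset of the correct density. The codegree hypothesis $\Delta_2(H)\le\delta_0 D$ enters twice here: it makes colour collisions rare in expectation, so a $(1-o(1))$-fraction of the activated edges actually get coloured, and it bounds how much re-randomising a single edge's activation and colour changes any of these statistics, so that bounded-difference inequalities of Azuma/Talagrand type, together with a union bound over the $O(n)$ relevant events, give the required concentration.

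Run the nibble for $t$ rounds, enough that $D_t$ is a small enough fraction of $D$ that $kD_t$ is strictly below the palette size $|A_e^{(t)}|$, which remains linear in $D$. Then $H_t$ has maximum degree about $D_t$, and since an edge of a $k$-graph with maximum degree $d$ meets fewer than $kd$ other edges, the colouring of $H_t$ can be finished greedily one edge at a time, always choosing from the common palette; as every round and the greedy finish draw from the same $(1+\delta)D$ colours, the total is at most $(1+\delta)D$, as needed. I expect the main obstacle to be exactly the trajectory-plus-concentration analysis of the nibble: writing down and solving the recursions for $D_i$ and $Q_i$ (in particular checking that the palettes do not run out, which is why the $(1+\delta)D$-to-$D$ slack matters), verifying the bounded-difference hypotheses from the codegree bound, and propagating the error terms over all $t$ rounds. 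The pigeonhole deduction of the matching statement and the greedy clean-up are routine by comparison.
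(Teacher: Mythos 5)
The paper does not prove this statement; it is quoted verbatim as a known black-box result of Pippenger and Spencer~\cite{Pippenger89}, so there is no internal proof to compare against. Your outline is a faithful recollection of the standard semi-random (nibble) argument --- iterated random bites, tracking degree and palette trajectories, concentration via the codegree bound, greedy clean-up once $kD_t$ drops below the surviving palette size --- and your pigeonhole deduction of the matching clause from the $(1+\delta)D$-colouring is correct (one needs $\delta_0\le(2k-1)\delta$, which the quantifiers permit). Be aware, though, that what you have written is an accurate plan rather than a proof: the entire technical content of the theorem lives in the trajectory-plus-concentration analysis you defer, so for the purposes of this paper the honest move is exactly what the authors do, namely cite~\cite{Pippenger89} rather than reprove it.
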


We now define a fractional matching which is a solution of linear programming relaxation of a matching.
\begin{definition}
    A \emph{fractional matching} of a $k$-graph $H$ is a function $\psi:E(H) \rightarrow [0, 1]$ such that for every vertex $v \in V(H)$, the inequality $\sum_{v\in e\in E(H)} \psi(e) \leq 1$ holds.
    If the equality holds for every $v \in V(H)$, then say a fractional matching is \emph{perfect}.
\end{definition}

We say a perfect fractional matching $\psi$ on hypergraph $H$ is \emph{$\delta$-pseudorandom} for some real number $\delta > 0$ if every pair of vertices $\{u, v\}\in \binom{V(H)}{2}$ satisfies the inequality $\sum_{\{u, v\}\subseteq e\in E(H)} \psi(e) \leq \delta.$
It is well-known that an optimal solution to linear programming problems can be attained in the rational. Hence, we may assume that all the values of $ \psi $ are rational.
The next lemma by the second author~\cite{lee2023towards} shows that a hypergraph having many perfect fractional matchings also contains a pseudorandom fractional matching.

\begin{lemma}[Lee~\cite{lee2023towards}]\label{lem:make-pseudorandom}
    Let $C, D > 0$ be positive integers and $\gamma, \delta, \eta > 0$ be positive real numbers that satisfy the inequality $\delta D (\gamma + 2^{-\gamma C}) \leq \frac{\eta}{2}.$
    Let $H$ be a multi-hypergraph with $\delta(\calH) \geq D$ and $\Delta_2(\calH) \leq \delta D.$ Assume for all sub-hypergraph $F$ of $H$ with $\Delta(F) \leq C$, we can find a perfect fractional matching in $H\setminus F.$ Then $H$ has an $\eta$-pseudorandom fractional matching.
\end{lemma}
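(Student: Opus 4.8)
The plan is to derive the pseudorandom fractional matching from the hypothesis by a probabilistic/averaging argument over a suitable family of fractional matchings. First I would use the hypothesis that deleting any sub-hypergraph $F$ of $H$ with $\Delta(F)\le C$ still leaves a perfect fractional matching: this means that for each vertex $v$, if we greedily ``reserve'' a bounded set of edges at $v$ we do not destroy solvability. Concretely, I would build a random perfect fractional matching $\psi$ by the following scheme. Repeatedly, while there remains a pair $\{u,w\}$ carrying too much $\psi$-mass, one would want to redistribute; but it is cleaner to construct $\psi$ from scratch as an average. Fix a large integer $N$ (a common denominator for all the rational perfect fractional matchings we will use), and consider the following iterative process run for $\gamma^{-1}$-many rounds conceptually: in each round, at each vertex, independently pick one incident edge at random with probability proportional to nothing in particular — rather, I would sample a uniformly random subset $F\subseteq E(H)$ obtained by, for each vertex, selecting up to $C$ incident edges, declare these ``forbidden,'' invoke the hypothesis to get a perfect fractional matching $\psi_F$ of $H\setminus F$, and finally average $\psi_F$ over the randomness of $F$. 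The point of forbidding random edges at each vertex is that any fixed pair $\{u,w\}$ has, with probability at least $\gamma$ say, at least one of its co-incident edges forbidden at $u$, which forces $\psi_F$ to avoid loading that pair; this should be tunable so that the expected $\psi$-mass on any fixed pair drops by roughly a factor involving $\gamma + 2^{-\gamma C}$.

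The key steps, in order, are: (1) set up the random forbidden-set $F$ so that $\Delta(F)\le C$ always, hence $H\setminus F$ has a perfect fractional matching $\psi_F$ by hypothesis; (2) define $\psi \defeq \mathbb{E}_F[\psi_F]$, which is again a perfect fractional matching of $H$ by linearity of the perfect-matching constraints (each $\psi_F$ satisfies $\sum_{v\in e}\psi_F(e)=1$ for all $v$, so the average does too); (3) bound, for a fixed pair $\{u,w\}\in\binom{V(H)}{2}$, the quantity $\sum_{\{u,w\}\subseteq e}\psi(e) = \mathbb{E}_F\big[\sum_{\{u,w\}\subseteq e}\psi_F(e)\big]$. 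For step (3), split according to whether some edge through $\{u,w\}$ lies in $F$: since the codegree $d_H(\{u,w\})\le \Delta_2(H)\le \delta D$, the total mass on this pair under any single $\psi_F$ is at most $\delta D$ trivially, but more usefully it is at most the number of surviving edges through $\{u,w\}$. One arranges the sampling of $F$ at $u$ so that each individual edge through $\{u,w\}$ is forbidden with probability $\approx \gamma$ independently-ish, whence the expected surviving contribution is at most $(\delta D)\cdot\big(\text{prob.\ a given edge survives}\big)$ plus a lower-order correction; the $2^{-\gamma C}$ term accounts for the (small) chance that essentially none of the up-to-$C$ random picks at $u$ hit the $\le\delta D$ relevant edges. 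Summing/optimizing gives the bound $\delta D(\gamma + 2^{-\gamma C}) \le \eta/2 < \eta$, so $\psi$ is $\eta$-pseudorandom.

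The main obstacle I anticipate is step (3): making the forbidden-set sampling clean enough that one genuinely gets the claimed $\gamma + 2^{-\gamma C}$ factor. Naively, sampling $C$ edges at $u$ uniformly from all $d_H(u)\ge D$ incident edges hits a fixed edge with probability only $\approx C/D$, which is far too small; the resolution must be to sample with a cleverly chosen non-uniform distribution (e.g.\ biasing toward edges that currently carry large fractional-matching mass, or iterating the averaging $\gamma^{-1}$ times so the heavy pairs are progressively thinned), and to track that after the process the codegree-mass on every pair, not just in expectation but with room to spare, is below $\eta$. A secondary technical point is ensuring that repeatedly invoking the hypothesis (possibly on nested deleted sub-hypergraphs across iterations) keeps the deleted part within $\Delta(F)\le C$; this forces the per-vertex budget $C$ to absorb all rounds, which is exactly why the hypothesis is phrased with a single bound $C$ and why the inequality $\delta D(\gamma + 2^{-\gamma C})\le \eta/2$ couples $C$, $\gamma$, and $\eta$ in the stated way. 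Once the sampling is pinned down, verifying that the average is a perfect fractional matching and assembling the codegree bound is routine.
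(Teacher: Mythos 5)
First, a point of reference: the paper does not prove \Cref{lem:make-pseudorandom} at all --- it is imported verbatim from Lee~\cite{lee2023towards} --- so there is no in-paper argument to compare against, and your proposal must stand on its own. It does not, and the gap sits exactly where you suspect: the choice of the forbidden set $F$. A single-round scheme ``sample a random $F$ with $\Delta(F)\le C$, take \emph{any} perfect fractional matching $\psi_F$ of $H\setminus F$, and average'' cannot work under \emph{any} distribution on $F$. Since each vertex forbids at most $C$ of its at least $D$ incident edges, $\sum_{e\ni u}\mathbb{P}[e\in F]\le C$, so one cannot make every edge through a fixed pair $\{u,w\}$ forbidden with probability close to $1$ for all pairs simultaneously; typically most such edges survive with probability near $1$. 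The hypothesis gives no control over \emph{which} perfect fractional matching of $H\setminus F$ is returned, so an adversarial $\psi_F$ may place weight $1$ on a surviving edge through $\{u,w\}$ whenever one exists, making $\mathbb{E}_F\bigl[\sum_{\{u,w\}\subseteq e}\psi_F(e)\bigr]$ close to $1$ rather than to $\eta$. Relatedly, your accounting ``expected surviving contribution $\le \delta D\cdot(\text{survival probability})$'' would require each relevant edge to be forbidden with probability at least $1-\gamma$, which the per-vertex budget $C$ forbids; a survival probability of $1-\gamma$ gives no saving.

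The missing idea is adaptivity: $F$ must depend on previously constructed matchings rather than being sampled independently of them. The natural (and, I believe, intended) mechanism is iterative: having built $\psi_1,\dots,\psi_i$, forbid the edges on which the accumulated weight is at least roughly $\gamma$ --- the vertex constraints guarantee there are at most about $1/\gamma\le C$ such edges per vertex, which is precisely where the degree bound on $F$ is spent and why $\gamma$ and $C$ are coupled in the hypothesis --- and let $\psi_{i+1}$ be a perfect fractional matching avoiding them. In the final average, edges that never become heavy contribute at most $d_H(\{u,w\})\cdot\gamma\le\delta D\gamma$ to any pair, while edges that do become heavy stop accruing weight and their residual contribution decays over the rounds, yielding the $\delta D\cdot 2^{-\gamma C}$ term. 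You name this adaptive variant only as a possible ``resolution'' and do not carry it out, while the non-adaptive scheme you actually describe provably fails; as written, the proof is incomplete at its central quantitative step.
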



\subsection{Structure of hypertrees}\label{subsec:hypertree}
In this section, we introduce several notations on hypertrees and several lemmas on their structure.
\begin{definition}
    For an integer $\ell \geq 2$, 
    a linear $k$-hypergraph $P$ with a pair of vertex set $$V(P)=\{x_1, x_2, \ldots, x_{\ell+1}\} \cup \{y_{i, j} \mid 1 \leq i \leq \ell, 1 \leq j \leq k-2 \}$$ 
    and the edge set $$E(P) = \bigl \{ \{x_1, y_{1, 1}, \ldots, y_{1, k-2}, x_2\}, \{x_2, y_{2, 1}, \ldots, y_{2, k-2}, x_3\}, \ldots, \{x_\ell, y_{\ell, 1}, \ldots, y_{\ell, k-2}, x_{\ell + 1}\} \bigl \}$$     
    is called a \emph{($k$-)linear path} of length $\ell$.
\end{definition}

For the convention, we also say a $k$-graph with $k$ vertices and only one edge as a linear path of length one. 
We say two sets $\{x_1, y_{1, 1}, \ldots, y_{1, k-2}\}$ and $\{x_{\ell+1}, y_{\ell, 1}, \ldots, y_{\ell, k-2}\}$ are \emph{ends} of the linear path $P$.
If $P$ is a length $\ell \geq 2$ linear path with its ends $X, Y$, we say a path $P$ together with two vertices $u \in X$ and $v \in Y$ is called a \emph{linear $u$--$v$ path} and $u, v$ are called \emph{end vertices} of $P$. 
If $P$ has length one together with any two vertices $u, v \in V(P)$, then we call $P$ a linear $u$--$v$ path.
    
For a linear $u$--$v$ path $P$, all the vertices other than $u$ and $v$ are called \emph{internal vertices} and let $\mathrm{int}(P)$ be the set of internal vertices of $P$.
Let $H$ be an $k$-graph. 
For two distinct vertices $u, v \in V(H)$, a linear $u$--$v$ path $P$ contained in $H$ is called a \emph{bare path} if for every internal vertex $w$, we have $N_H(w)=N_P(w)$.
For a hypergraph $H$, its subgraph $P$ isomorphic to a linear path is called a \emph{semi-bare path} if we have $N_H(w)=N_P(w)$ whenever $w$ is not contained in the ends.
We note that a bare path is only defined when we specify the end vertices $u$ and $v$ while a semi-bare path is defined on a linear path.

A $k$-graph $H$ is called a \emph{linear $k$-hypertree}, or simply hypertree, if for every $\{u, v\} \in \binom{V(H)}{2}$, there exists a unique linear $u$--$v$ path in $H$.
If each component of $H$ is a linear $k$-hypertree, then we say that $H$ is a \emph{linear $k$-forest}.
Recall that an edge with at least $k-1$ degree $1$ vertices is called a \emph{leaf edge}.

It is a well-known fact~\cite{Krivelevich2010} that if a ($2$-)tree has only a small number of leaves then it contains many vertex-disjoint bare paths.
This result can be generalized into hypertrees.
For example, the proof in~\cite{Seonghyuk2022} can be easily adapted to linear $k$-hypertreees as follows.

\begin{lemma}\label{lem:leaf_path_hypertree}
    Let $k, \ell, m \geq 2$ be an integer.
    Let $T$ be a linear $k$-hypertree with at most $\ell$ leaf edges.
    Then there exist edge-disjoint semi-bare paths $P_1, P_2, \cdots, P_s$ of length $m+1$ such that
    \begin{align*}
         e(T-P_1-P_2-\cdots-P_s)\leq 6m\ell + \frac{2e(T)}{m+1}
    \end{align*}
    where $T-P$ is a graph obtained by deleting all the vertices of $P$ except ends.
\end{lemma}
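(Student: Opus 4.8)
The plan is to mimic the standard "bare path extraction" argument for ordinary trees, adapting it to the linear-path structure of $k$-hypertrees. First I would set up a DFS/rooted structure on $T$: pick an arbitrary leaf edge as a root and orient $T$ away from it, so that each edge acquires a parent and each non-root leaf edge sits at the bottom of a branch. The key combinatorial observation is that a hypertree with at most $\ell$ leaf edges has at most $\ell - 1$ "branch edges" — edges whose removal (in the appropriate sense on the edge-incidence tree of $T$) leaves more than one nontrivial component — just as in the $2$-graph case. Deleting the leaf edges and branch edges from $T$ breaks the remainder into at most $2\ell$ edge-disjoint semi-bare segments, each of which is a linear path all of whose internal vertices have $N_T(w)=N_P(w)$.

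Next I would chop each of these long semi-bare segments into consecutive pieces of length exactly $m+1$. A segment of edge-length $L$ yields $\lfloor L/(m+1)\rfloor$ such pieces and leaves a leftover of at most $m$ edges at the end; collecting these leftovers over all at most $2\ell$ segments contributes at most $2\ell m$ uncovered edges. I would take $P_1,\dots,P_s$ to be all the length-$(m+1)$ pieces produced this way. They are pairwise edge-disjoint and each is a semi-bare path, since being a subpath of a semi-bare segment preserves the condition $N_T(w)=N_P(w)$ for every vertex $w$ not lying in an end of the piece. It remains to bound $e(T-P_1-\cdots-P_s)$: this quantity counts (a) all leaf edges and branch edges, which is at most $2\ell$, plus (b) all the segment leftovers, at most $2\ell m$, plus (c) the two "end" edges of each deleted piece, which are retained by the definition of $T-P$ — but this last contribution is where I must be slightly careful, since there can be up to $s \le 2e(T)/(m+1)$ pieces and hence potentially that many retained end-edges. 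Accounting honestly: each deleted piece $P_i$ returns its two end-edges to $T - P_1 - \cdots - P_s$, giving at most $2s \le 2e(T)/(m+1)$ edges, and adding the $\le 2\ell + 2\ell m \le 6m\ell$ edges from (a) and (b) yields the claimed bound $6m\ell + \frac{2e(T)}{m+1}$.

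The main obstacle — and the only genuinely hypergraph-specific point — is verifying the bound on the number of branch edges and confirming that "deleting leaf and branch edges" really does decompose the rest of $T$ into semi-bare paths; in the hypergraph setting one must check that at a vertex $v$ of degree $2$ lying on such a segment, the two edges through $v$ genuinely form a linear path locally (which is automatic since $T$ is linear, so the two edges meet only in $v$) and that $v$ has no other $T$-neighbours, which is exactly what degree $2$ and the linearity give. The rest is the bookkeeping sketched above, and I would cite the argument of~\cite{Seonghyuk2022}, of which this is a direct adaptation, for the branch-edge count, rather than reproving it in full.
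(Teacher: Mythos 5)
Your proposal follows the same route as the paper, which gives no self-contained proof and instead notes that the semi-bare-path decomposition argument of~\cite{Seonghyuk2022} (the hypergraph analogue of Krivelevich's bare-path lemma) adapts directly: that is exactly the decompose-into-$O(\ell)$-segments-and-chop scheme you describe, and your counts ($\le 2\ell$ segments each leaving $\le m$ leftover edges, plus $O(\ell)$ leaf/branch edges) do sum to at most $6m\ell$. One small correction: the end edges of each $P_i$ are \emph{not} retained in $T-P_i$ --- each end edge contains a non-end vertex of $P_i$ (e.g.\ the connector $x_2$), so all $m+1$ edges of $P_i$ are deleted --- but since this only means you over-counted the surviving edges by $2s\le 2e(T)/(m+1)$, your final bound still holds, with the stated extra term simply providing unused slack.
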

We note that by applying this result componentwise, the same conclusion holds when $T$ is a linear $k$-forest.

\subsection{Other tools}

As we plan to find many perfect fractional matchings to produce a pseudorandom fractional matching, we need the following basic lemma from mathematical programming.

\begin{lemma}[Farkas' Lemma (see~\cite{Gale1951})]\label{lem:farkas}
    For a real matrix $A \in \mathbb{R}^{m \times n}$ and a real vector $\mathbf{b} \in \mathbb{R}^m$, exactly one of the following is true.
    \begin{itemize}
        \item There exists $\mathbf{x} \in \mathbb{R}^n$ such that $A\mathbf{x}=\mathbf{b}$ and $\mathbf{x}_i \geq 0$ for all $1 \leq i \leq n$.
        \item There exists $\mathbf{y} \in \mathbb{R}^m$ such that $(A^T\mathbf{y})_i \geq 0$ for all $1 \leq i \leq n$ and $\mathbf{b}^T\mathbf{y}<0$.
    \end{itemize}
\end{lemma}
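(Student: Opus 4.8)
The plan is to show that the two alternatives cannot both hold and that at least one of them always does. The easy direction is mutual exclusivity: if $\mathbf{x} \geq 0$ satisfies $A\mathbf{x} = \mathbf{b}$ and $\mathbf{y}$ satisfies $A^T\mathbf{y} \geq 0$, then
\[
\mathbf{b}^T\mathbf{y} = (A\mathbf{x})^T\mathbf{y} = \mathbf{x}^T(A^T\mathbf{y}) \geq 0,
\]
being a sum of products of nonnegative numbers, which contradicts $\mathbf{b}^T\mathbf{y} < 0$. So at most one of the two statements is true, and it remains to show that the second holds whenever the first fails.

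For this substantive direction I would use a separation argument. Let $C = \{A\mathbf{x} : \mathbf{x} \geq 0\} \subseteq \mathbb{R}^m$ be the conical hull of the columns $A_1, \dots, A_n$ of $A$; by assumption $\mathbf{b} \notin C$. Granting that $C$ is a closed convex set, the hyperplane separation theorem gives $\mathbf{y} \in \mathbb{R}^m$ and $\alpha \in \mathbb{R}$ with $\mathbf{y}^T\mathbf{b} < \alpha \leq \mathbf{y}^T\mathbf{z}$ for all $\mathbf{z} \in C$. Since $\mathbf{0} \in C$ we get $\alpha \leq 0$, hence $\mathbf{y}^T\mathbf{b} < 0$; and since $C$ is stable under multiplication by positive scalars, the uniform lower bound $\mathbf{y}^T\mathbf{z} \geq \alpha$ on $C$ forces $\mathbf{y}^T\mathbf{z} \geq 0$ throughout $C$ (otherwise rescaling such a $\mathbf{z}$ sends $\mathbf{y}^T\mathbf{z} \to -\infty$). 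Applying this to $\mathbf{z} = A_j$ yields $(A^T\mathbf{y})_j = \mathbf{y}^T A_j \geq 0$ for every $j$, which is exactly the second alternative.

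The main obstacle is the one topological input above: that the finitely generated cone $C$ is closed, without which the separation theorem does not apply. I would prove this by a Carathéodory-type reduction, observing that every element of $C$ is already a nonnegative combination of a \emph{linearly independent} subset of $\{A_1,\dots,A_n\}$; hence $C = \bigcup_S C_S$ is a finite union over index sets $S$ with $\{A_j\}_{j \in S}$ linearly independent, where $C_S = \{\sum_{j\in S}\lambda_j A_j : \lambda_j \geq 0\}$ is the image of the closed orthant $\mathbb{R}_{\geq 0}^{|S|}$ under an injective linear map and therefore closed, and a finite union of closed sets is closed. A fully elementary alternative, avoiding topology altogether, is Fourier–Motzkin elimination on the system $A\mathbf{x}=\mathbf{b}$, $\mathbf{x}\geq 0$: eliminating $x_1,\dots,x_n$ in turn leaves a variable-free system of inequalities that is infeasible precisely when some nonnegative combination of the original inequalities reads $0 \leq -1$, and the coefficients of that combination assemble into the desired $\mathbf{y}$. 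Since the paper only invokes this classical lemma, a short version of either argument suffices.
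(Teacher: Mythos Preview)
Your proof is correct and is the standard separating-hyperplane argument for Farkas' Lemma; the closedness of the finitely generated cone via Carath\'eodory and the scaling trick to upgrade $\mathbf{y}^T\mathbf{z}\ge\alpha$ to $\mathbf{y}^T\mathbf{z}\ge 0$ are both handled properly. Note, however, that the paper does not prove this lemma at all: it is stated as a classical tool with a citation to~\cite{Gale1951} and used as a black box in the proof of \Cref{thm:fractional_matching}, so there is no ``paper's own proof'' to compare against.
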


We frequently use the following well-known concentration inequality (see, for example, ~\cite{alonprobabilistic})

\begin{lemma}[The Chernoff Bound]\label{lem:chernoff}
Let $X_1, X_2, \cdots, X_n$ be i.i.d. Bernoulli random variables and let $X= \sum_{i=1}^n X_i$. Then for $\ve \in (0, 1)$, $$\mathbb{P}(|X-\mathbb{E}[X]| \geq \ve \mathbb{E}[X] ) \leq 2\exp\left(-\frac{\ve^2 \mathbb{E}[X]}{3}\right).$$     
\end{lemma}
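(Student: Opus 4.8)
\textbf{Proof proposal for Lemma~\ref{lem:chernoff}.}

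The plan is to derive both tails of the concentration inequality from the standard exponential (Bernstein–Chernoff) moment-generating-function method, and then combine them. Write $\mu = \mathbb{E}[X] = \sum_{i=1}^n \mathbb{E}[X_i]$, and let $p_i = \mathbb{P}(X_i = 1)$, so $\mu = \sum_i p_i$. Throughout I use the elementary inequality $1 + x \leq e^x$ valid for all real $x$, and independence of the $X_i$ to factor the moment generating function as $\mathbb{E}[e^{tX}] = \prod_{i=1}^n \mathbb{E}[e^{tX_i}] = \prod_{i=1}^n (1 - p_i + p_i e^t)$.

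First I would handle the upper tail $\mathbb{P}(X \geq (1+\ve)\mu)$. For $t > 0$, Markov's inequality applied to $e^{tX}$ gives $\mathbb{P}(X \geq (1+\ve)\mu) \leq e^{-t(1+\ve)\mu}\,\mathbb{E}[e^{tX}]$. Bounding each factor $1 - p_i + p_i e^t = 1 + p_i(e^t - 1) \leq \exp(p_i(e^t-1))$ and multiplying over $i$ yields $\mathbb{E}[e^{tX}] \leq \exp(\mu(e^t - 1))$, hence $\mathbb{P}(X \geq (1+\ve)\mu) \leq \exp\left(\mu(e^t - 1 - t(1+\ve))\right)$. Optimizing over $t$ by taking $t = \ln(1+\ve)$ gives the bound $\exp\left(-\mu\left((1+\ve)\ln(1+\ve) - \ve\right)\right)$, and then a routine calculus estimate $(1+\ve)\ln(1+\ve) - \ve \geq \ve^2/3$ for $\ve \in (0,1)$ (which follows by comparing Taylor series, or by checking the function $\ve \mapsto (1+\ve)\ln(1+\ve) - \ve - \ve^2/3$ is nonnegative on $[0,1]$) produces $\mathbb{P}(X \geq (1+\ve)\mu) \leq \exp(-\ve^2\mu/3)$. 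The lower tail $\mathbb{P}(X \leq (1-\ve)\mu)$ is handled symmetrically: apply Markov to $e^{-tX}$ with $t > 0$, use the same factorization to get $\mathbb{P}(X \leq (1-\ve)\mu) \leq \exp\left(\mu(e^{-t} - 1 + t(1-\ve))\right)$, optimize with $t = -\ln(1-\ve) > 0$, and bound the resulting exponent using $(1-\ve)\ln(1-\ve) + \ve \geq \ve^2/2 \geq \ve^2/3$ for $\ve \in (0,1)$, giving $\mathbb{P}(X \leq (1-\ve)\mu) \leq \exp(-\ve^2\mu/2) \leq \exp(-\ve^2\mu/3)$. Adding the two tail bounds via the union bound gives $\mathbb{P}(|X - \mu| \geq \ve\mu) \leq 2\exp(-\ve^2\mu/3)$, as claimed.

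I do not expect any genuine obstacle here: this is the textbook multiplicative Chernoff bound, and the only mildly technical points are the two one-variable inequalities $(1+\ve)\ln(1+\ve)-\ve \geq \ve^2/3$ and $(1-\ve)\ln(1-\ve)+\ve \geq \ve^2/3$ on $(0,1)$, both of which are standard and can be verified by elementary calculus (or simply cited, e.g.\ from~\cite{alonprobabilistic}). Since the statement as given only requires i.i.d.\ variables, one could alternatively invoke the cited reference directly; but the self-contained derivation above is short enough to include if desired. In the interest of brevity, referring the reader to~\cite{alonprobabilistic} is the natural choice.
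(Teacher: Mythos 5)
Your derivation is correct and is the standard textbook proof of the multiplicative Chernoff bound; the paper itself gives no proof at all and simply cites~\cite{alonprobabilistic}, exactly as you suggest doing at the end. The two calculus facts you rely on, $(1+\ve)\ln(1+\ve)-\ve \geq \ve^2/3$ and $(1-\ve)\ln(1-\ve)+\ve \geq \ve^2/2$ on $(0,1)$, are both true and routine, so there is no gap.
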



\section{Regularization}\label{sec:regularization}

In order to obtain a pseudorandom hypergraph in the given dense linear hypergraph, we first prove that if a linear hypergraph $H$ has large minimum degree, then $H$ contains a perfect fractional matching.

\begin{theorem}\label{thm:fractional_matching}
    If $H$ is an $n$-vertex linear $k$-graph with $\delta(H) \geq \frac{n}{k}$, 
    then $H$ has a perfect fractional matching. 
\end{theorem}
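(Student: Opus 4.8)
The plan is to deduce the statement from linear programming duality, in the form of Farkas' lemma (\Cref{lem:farkas}), applied to the vertex--edge incidence matrix of $H$. Let $A\in\{0,1\}^{V(H)\times E(H)}$ be this matrix and let $\mathbf 1$ be the all-ones vector indexed by $V(H)$; then a perfect fractional matching is exactly a vector $\psi\in\mathbb R^{E(H)}$ with $A\psi=\mathbf 1$ and $\psi\ge 0$ (the constraints $\sum_{e\ni v}\psi(e)=1$ together with $\psi\ge 0$ automatically force $\psi(e)\le 1$ for every edge, since each edge meets some vertex, so membership in $[0,1]^{E(H)}$ is free). So I would suppose for contradiction that no such $\psi$ exists. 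By \Cref{lem:farkas} there is then a vector $y=(y_v)_{v\in V(H)}$ with $\sum_{v\in e}y_v\ge 0$ for every $e\in E(H)$ but $\sum_{v\in V(H)}y_v<0$, and the task becomes to contradict this using $\delta(H)\ge n/k$ and linearity of $H$.

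For the contradiction, since $\sum_v y_v<0$ some coordinate is negative, so I pick $u$ attaining $y_u=\min_v y_v$ and set $t\defeq -y_u>0$. Let $e_1,\dots,e_d$ be the edges through $u$, so $d=d_H(u)\ge n/k$. Here is the one place linearity enters: the links $N_i\defeq e_i\setminus\{u\}$ are pairwise disjoint (a vertex common to $N_i$ and $N_j$ would, together with $u$, lie in $e_i\cap e_j$), each of size $k-1$, so $N_H(u)=N_1\sqcup\cdots\sqcup N_d$ has exactly $d(k-1)\ge (k-1)n/k$ vertices. Applying the edge constraint to $e_i$ gives $\sum_{v\in N_i}y_v\ge -y_u=t$, and summing over $i$ (no overcounting, by disjointness),
\[
    \sum_{v\in N_H(u)} y_v \;=\; \sum_{i=1}^{d}\sum_{v\in N_i}y_v \;\ge\; d\,t \;\ge\; \tfrac{n}{k}\,t .
\]

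Finally I would control the remaining vertices $R\defeq V(H)\setminus(\{u\}\cup N_H(u))$: since $|N_H(u)|\ge (k-1)n/k$ we get $|R|=n-1-d(k-1)\le \tfrac{n}{k}-1$, and every $v\in R$ satisfies $y_v\ge y_u=-t$, so $\sum_{v\in R}y_v\ge -\bigl(\tfrac{n}{k}-1\bigr)t$. Adding the three contributions,
\[
    \sum_{v\in V(H)}y_v \;=\; y_u+\sum_{v\in N_H(u)}y_v+\sum_{v\in R}y_v \;\ge\; -t+\tfrac{n}{k}\,t-\bigl(\tfrac{n}{k}-1\bigr)t \;=\; 0,
\]
contradicting $\sum_v y_v<0$; hence $H$ has a perfect fractional matching. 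I do not expect a real obstacle here — the only things to get right are the Farkas set-up and the realization that linearity is precisely what lets the edge inequalities at $u$ be summed without overcounting. That the final estimate lands on exactly $0$, rather than with slack, is to be expected: the complete $k$-partite linear $k$-graph in \Cref{prop:lower-bound} shows the hypothesis $\delta(H)\ge n/k$ cannot be weakened.
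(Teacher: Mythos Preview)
Your proof is correct and shares the paper's overall strategy: apply Farkas' lemma to the incidence system, then analyse the extremal vertex $u$ minimising $y_u$, using linearity to control the edges through $u$. Where the paper partitions $V(H)$ by the sign of $y_v$ into $X$ and $Y$, classifies the edges through $u$ by $|e\cap X|$, and combines the resulting counts to derive $d_H(u)<n/k$, you take a more direct line: sum the edge constraints at $u$ over the disjoint links $N_i=e_i\setminus\{u\}$ to get $\sum_{v\in N_H(u)}y_v\ge d_H(u)\,t$, bound the remainder $R$ trivially by $|R|\,(-t)$, and observe that $d_H(u)\ge n/k$ forces the total to be nonnegative. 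Your route avoids the $x_i$ bookkeeping and the intermediate ``$d_H(u)<n/k$'' contradiction, at the cost of not isolating exactly where the degree threshold bites; the paper's version makes that structural point explicit, which is perhaps why they chose it. Both arguments use linearity in the identical way (the links $e_i\setminus\{u\}$ are pairwise disjoint), and both are tight at $\delta(H)=n/k$.
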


\begin{proof}[Proof of \Cref{thm:fractional_matching}] 
    Let $A$ be the $|V(H)| \times |E(H)|$ incidence matrix where the rows are indexed by vertices of $H$ and the columns are indexed by the edges of $H$ with $A_{ve} = 1$ if $v \in e$ and $0$ otherwise.
    Let $\mathbf{b} = \mathbf{1}$ be the size $|E(H)|$ column vector with all entries $1$.
    Then $\psi$ is a perfect fractional matching if and only if $A\psi = \mathbf{b}$ when we consider $\psi$ as a vector in $\mathbb{R}^{E(H)}$.
    Suppose such $\psi$ does not exist.
    Then by \Cref{lem:farkas}, there exists $\mathbf{y} \in \mathbb{R}^{V(H)}$ such that $(A^T\mathbf{y})_e \geq 0$ for all $e \in E(H)$ and $\mathbf{b}^T\mathbf{y}<0.$
    In other words,
    \begin{align}
        & \,\,\,\sum_{v \in e} \mathbf{y}_v \geq 0 \text{ for every } e \in E(H), \label{eq:farkas-1}\\
        & \sum_{v \in V(H)} \mathbf{y}_v <0.\label{eq:farkas-2}
    \end{align}

    Let $X$ be the set of vertices with $\mathbf{y}_v \geq 0$, $Y = V(H) \setminus X$, and $u \in Y$ be the vertex with the smallest $\mathbf{y}_u$ (i.e., $|\mathbf{y}_u|$ is the largest among the vertices in $Y$). 
    
    We note that $\sum_{v \in V(H)} \mathbf{y}_v <0$ implies that $Y$ is nonempty so one can choose such $u$.
    Let $E_u$ be the set of edges containing $u$. For each $0 \leq i \leq k-1$, let $x_i$ be the number of edges $e \in E_u$ such that $|e \cap X|=i$.
    From \eqref{eq:farkas-1}, we have $x_0=0$. 
    As $G$ is a linear hypergraph, for every two distinct edges $e_1, e_2 \in E_u$, we have $e_1 \cap e_2 = \{u\}$.
    Thus, by considering the size of the neighborhood of $u$ in $X$, we have 
    \begin{align}\label{eq:size_of_nbhd}
    x_1 + 2x_2 + \cdots + (k-1)x_{k-1} \leq |X|.
    \end{align}

    We now consider the difference $(\sum_{v \in V(H)} \mathbf{y}_v) - (\sum_{e \in E_u} \sum_{v \in e} \mathbf{y}_v)$. 
    As the first term is negative and the second term is nonnegative, this difference is negative.
    If a vertex $v$ is contained in the neighborhood of $u$, then $\mathbf{y}_v$ is counted exactly once in the both of the $\sum_{v \in V(H)} \mathbf{y}_v$ and $\sum_{e \in E_u} \sum_{v \in e} \mathbf{y}_v$. Therefore,

    \begin{align*}
        (\sum_{v \in V(H)} \mathbf{y}_v) - (\sum_{e \in E_u} \sum_{v \in e} \mathbf{y}_v) & = \sum_{v \in V(H) \setminus N_H(u)} \mathbf{y}_v - d_H(u)\mathbf{y}_u \\
        & \geq \sum_{v \in (V(H) \setminus N_H(u)) \cap Y} \mathbf{y}_v - d_H(u)\mathbf{y}_u \\
        & \geq  (|(V(H) \setminus N_H(u)) \cap Y| - d_H(u)) \mathbf{y}_u.
    \end{align*}

    The final inequality is from the minimality of $\mathbf{y}_u.$
    Thus, we have $|(V(H) \setminus N_H(u)) \cap Y| - d_H(u) > 0$, as otherwise we get a contradiction from $(|(V(H) \setminus N_H(u)) \cap Y| - d_H(u)) \mathbf{y}_u < 0.$ 
    As $|N_H(u) \cap Y| = \sum_{i = 1}^{k-2} (k-1-i)x_i$, we have 

    \begin{align}\label{eq:number_of_covered_vertices}
        |(V(H) \setminus N_H(u)) \cap Y| - d_H(u) & = |Y| - \sum_{i = 1}^{k-1} (k-1-i)x_i - \sum_{i = 1}^{k-1}x_i \nonumber \\
        &= |Y| -  \sum_{i = 1}^{k-1} (k - i) x_i >0.
    \end{align}
    
    By combining \eqref{eq:size_of_nbhd} and \eqref{eq:number_of_covered_vertices}, we have $k \sum_{i = 1}^{k-1} x_i < |X|+|Y| = n$ and therefore $d_H(u) = \sum_{i = 1}^{k-1} x_i < \frac{n}{k}$.
    It contradicts the minimum degree condition so such $\mathbf{y}$ does not exist. 
    Therefore, we have a perfect fractional matching of $H$. 
\end{proof}

By combining \Cref{thm:fractional_matching} and \Cref{lem:make-pseudorandom}, we obtain the following corollary, which is the main lemma for proving \Cref{thm:tree-tiling}.

\begin{corollary}\label{cor:regularization}
    Let $0<\frac{1}{n}, \frac{1}{C} \ll \lambda < 1.$
    Let $H$ be an $n$-vertex linear $k$-graph with $\delta(H) \geq \frac{n}{k} + C$, then there exists a $D$-regular multi-hypergraph $F$ on the same vertex set for some positive integer $D$ such that 
    \begin{enumerate}
        \item $F_{simp} \subseteq H$ and 
        \item for all $e\in E(F)$, we have $\mu_{F}(e) \leq \lambda D$.
    \end{enumerate}
\end{corollary}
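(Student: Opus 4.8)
The plan is to obtain a $\mu$-pseudorandom perfect fractional matching of $H$ by combining \Cref{thm:fractional_matching} with \Cref{lem:make-pseudorandom}, and then to clear denominators to turn it into the required regular multi-hypergraph. The key point is that a rational perfect fractional matching is essentially the same object as a regular multi-hypergraph $F$ with $F_{simp}\subseteq H$: suppose $\psi\colon E(H)\to[0,1]$ is a $\mu$-pseudorandom perfect fractional matching, which we may take to have rational values, as noted just before \Cref{lem:make-pseudorandom}. Writing $\psi(e)=p_e/q$ over a common denominator $q\in\ZZ_{>0}$ with each $p_e\in\ZZ_{\ge 0}$, let $F$ be the multi-hypergraph on $V(H)$ in which every $e\in E(H)$ has multiplicity $\mu_F(e):=p_e$. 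Then $F_{simp}\subseteq H$ is immediate, and $d_F(v)=\sum_{v\in e}p_e=q\sum_{v\in e}\psi(e)=q$ for every vertex $v$, so $F$ is $q$-regular and we take $D:=q$. For condition~(2): since $k\ge 2$, every edge $e$ contains some pair $\{u,v\}$, hence by $\mu$-pseudorandomness $\mu_F(e)=q\,\psi(e)\le q\sum_{\{u,v\}\subseteq f}\psi(f)\le\mu q=\mu D$. So everything reduces to producing such a $\psi$.

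To produce a $\mu$-pseudorandom perfect fractional matching of $H$ I will apply \Cref{lem:make-pseudorandom}. In the notation of that lemma, set its minimum-degree parameter to $D_0:=\lfloor n/k\rfloor$, its codegree parameter to $\delta_0:=1/D_0$, and put $\eta:=\mu$ and $\gamma:=\mu/4$, with its bounded-degree parameter taken to be $C$ (which we may assume to be a positive integer). Since $H$ is linear we have $\Delta_2(H)\le 1=\delta_0 D_0$, and $\delta(H)\ge\frac{n}{k}+C\ge D_0$, so the two structural hypotheses hold; moreover the required inequality $\delta_0 D_0(\gamma+2^{-\gamma C})\le\eta/2$ becomes $\frac{\mu}{4}+2^{-\mu C/4}\le\frac{\mu}{2}$, which holds because $\frac{1}{C}\ll\mu$ forces $2^{-\mu C/4}\le\frac{\mu}{4}$. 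The remaining hypothesis is that, for every subgraph $F'$ of $H$ with $\Delta(F')\le C$, the $k$-graph $H\setminus F'$ obtained by deleting the edges of $F'$ has a perfect fractional matching; but $H\setminus F'$ is again a linear $k$-graph on $n$ vertices with $\delta(H\setminus F')\ge\delta(H)-C\ge\frac{n}{k}$, so \Cref{thm:fractional_matching} applies verbatim. Hence \Cref{lem:make-pseudorandom} yields a $\mu$-pseudorandom perfect fractional matching of $H$, and the construction of the previous paragraph turns it into the desired $F$.

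I do not anticipate a genuine obstacle: the corollary is an assembly of \Cref{thm:fractional_matching} and \Cref{lem:make-pseudorandom}, and the only care needed is in the parameter bookkeeping. Two points are worth singling out. First, linearity pins $\Delta_2(H)$ down to $1$, which is what lets us take $\delta_0 D_0=1$ and thereby collapses the technical inequality in \Cref{lem:make-pseudorandom} to a statement about $\gamma$, $C$, and $\mu$ only --- handled by the hypothesis $\frac{1}{C}\ll\mu$. Second, the condition $k\ge 2$ is exactly what guarantees that each edge contains a pair of vertices, which is what converts the pairwise pseudorandomness bound on $\psi$ into the per-edge multiplicity bound required by condition~(2).
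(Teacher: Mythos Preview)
Your proof is correct and follows essentially the same route as the paper's: apply \Cref{lem:make-pseudorandom} (with the same choice of parameters $\gamma=\mu/4$, $\eta=\mu$, $\delta D\approx 1$, and the same $C$) using \Cref{thm:fractional_matching} to supply the perfect fractional matchings in $H\setminus F'$, then clear denominators. Your write-up is in fact slightly more explicit than the paper's in verifying condition~(2) via the pair contained in each edge.
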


\begin{proof}[Proof of \Cref{cor:regularization}]
    We claim that $H$ contains a $\lambda$-pseudorandom fractional matching $\varphi$.
    If the claim is true, then we may assume that all the values of $\varphi$ are rational so there exists an integer $D$ such that $D \cdot \varphi(e)$ is an integer for every $e \in E(H)$.
    Then we construct $F$ by adding an edge $e \in E(H)$ by $D \cdot \varphi(e)$ times.
    Then as $\sum_{e \ni v} \varphi(e)=1$ for every $v \in V(H)$, the hypergraph $F$ is $D$-regular and each edge $e \in E(H)$ is contained in $D \cdot \varphi(e) \leq \lambda D$ times in $F$. 
    Therefore, $F$ is the desired multi-hypergraph.
    We now prove the existence of $\varphi$.
    We apply \Cref{lem:make-pseudorandom} to $H$ with the following parameters. 
    
    \begin{center}
        \begin{tabular}{ |c|c|c|c|c|c| } 
         \hline
         parameter & $C$ & $n/k$ & $k/n$ & $\lambda$ & $\frac{\lambda}{4}$\\ 
         \hline
         plays the role of & $C$ & $D$ & $\delta$ & $\eta$ & $\gamma$ \\ 
         \hline
        \end{tabular}
    \end{center}
    
    By \Cref{thm:fractional_matching}, for every subhypergraph $F \subseteq H$ with $\Delta(F) \leq C$, $H \setminus F$ contains a perfect fractional matching.
    We also have that $\delta D (\gamma + 2^{-\gamma C}) = \frac{\lambda}{4} + 2^{-\frac{\lambda}{4}C} \leq \frac{\lambda}{2}$ with sufficiently large constant $C$.
    Therefore, by \Cref{lem:make-pseudorandom}, we have a $\lambda$-pseudorandom fractional matching $\varphi$. 
\end{proof}


\section{Proof of \Cref{thm:tree-tiling}}\label{sec:tree-tiling}

\begin{proof}[Proof of \Cref{thm:tree-tiling}]
    Let $m$ be the number of edges of $T$ and $t=m(k-1)+1$ be the number of vertices of $T$.
    Choose $\eta, \mu, C$ satisfying $$0<\frac{1}{n} , \frac{1}{C} \ll \eta \ll \mu \ll \frac{1}{k}, \frac{1}{t}, \varepsilon.$$
    Let $H$ be an $n$-vertex linear $k$-graph with $\delta(H) \geq \frac{n}{k} + C$.
    Then by \Cref{cor:regularization}, there exists a $D$-regular multi-hypergraph $H'$ on the same vertex set such that $H'_{simp} \subseteq H$ and $H'$ is $(D, 0, \eta)$-pseudorandom for some integer $D \geq 1$.
    We now construct an auxiliary multi-$t$-graph $\mathcal{H}$ as follows.
    We have $V(\mathcal{H}) = V(H')$ and for each labeled copy of $T$ in $H'$, we add an edge to $\mathcal{H}$.
    We note that we consider two copies of $T$ in $H'$ to be different if the labelings of the vertices are different even if the sets of vertices are the same. 
    Similarly, if the sets of edges are different, then two copies of $T$ are considered different even if the labelings of the vertices are the same.
    We now claim that $\mathcal{H}$ is $(D', \mu, \mu)$-pseudorandom for some integer $D' \geq 1$.
    If the claim is true, then by \Cref{thm:pippenger_spencer}, $\mathcal{H}$ has a matching that covers all but at most $\varepsilon n$ vertices.
    By the construction, it gives the vertex-disjoint copies of $T$ that cover all but at most $\varepsilon n$ vertices of $H$. It concludes the proof.

    We now prove the claim.
    We first prove that there exists an integer $D'$ such that for every $u \in V(\mathcal{H})$, we have $d_{\mathcal{H}}(u) = D'(1 \pm \mu)$.
    We choose an arbitrary vertex $v \in V(T)$ and count the number of copies of $T$ in which $v$ is mapped to $u$.
    We order the edges of $T$ by starting a breadth first search(BFS) from $v$. 
    That is, we construct an ordering by placing the edges incident to $v$ first, and for each edge $e$ that is placed, we add the edges incident to $e$ that is not placed yet at the end of the ordering.
    Let $e_1, e_2, \ldots, e_m$ be the edges of $T$ in the ordering.
    We greedily count the number of copies of $T$ in which $v$ is mapped to $u$ by extending the embedding one by one.
    For each $0 \leq j \leq m-1$, once the image of $e_1, \ldots, e_j$ and the labeling of the vertices in the image are fixed, let $u'$ be the image of the vertex in $(\bigcup_{i=1}^j e_i) \cap e_{j+1}$.
    Then the number of ways to embed $e_{j+1}$ is at most $d_{H'}(u') (k-1)! = D (k-1)!$ where $(k-1)!$ is the number of ways to label the vertices in the image $e_{j+1}$.
    Similarly, the number of ways to embed $e_{j+1}$ is at least $(D - j(k-1) \eta D)(k-1)! \geq (1-t\eta)D(k-1)!$ as $j(k-1) \eta D$ is the maximum possible number of edges that incident to $u'$ and contains another vertex that is already used in the embedding.
    Therefore, the number of ways to embed $T$ in which $v$ is mapped to $u$ is at most $D^{m}(k-1)!^{m}$ and at least $(1-t\eta)^{m}D^{m}(k-1)!^{m} \geq (1-\mu)D^{m}(k-1)!^{m}$.
    As it is independent from the choice of $u$ and $v$, by taking $D' = tD^{m}(k-1)!^{m}$, every vertex in $\mathcal{H}$ has degree $D'(1 \pm \mu)$.

    We finally show that the codegree of $\mathcal{H}$ is at most $\mu D'$.
    We fix two distinct vertices $u_1, u_2 \in V(\mathcal{H})$. 
    We choose arbitrarily two distinct vertices $v_1, v_2 \in V(T)$ and aim to count the number of $T$ copies in $H'$ in which $v_1$ is mapped to $u_1$ and $v_2$ is mapped to $u_2$.
    We similarly order the edges of $T$ by using BFS starting from $v_1$ and let $e_1, e_2, \ldots, e_m$ be the resulting ordering.
    Let $j \in [m]$ be the smallest index such that $v_2$ is contained in $e_j$.
    Then we first embed the edges $e_1, \ldots, e_{j-1}$ and the number of ways to do so is at most $D^{j-1}(k-1)!^{j-1}$.
    After we embed $e_1, \ldots, e_{j-1}$ and choose the labeling of the vertices, the number of ways to embed $e_j$ is at most $\eta D (k-2)! \leq \eta D (k-1)!$ as the codegree of the image of the vertex in $(\bigcup_{i=1}^j e_i) \cap e_{j+1}$ and $u_2$ is at most $\eta D$.
    The number of ways to embed $e_{j+1}, \ldots, e_m$ is at most $D^{m-j}(k-1)!^{m-j}$ by the same argument.
    Therefore, the number of ways to embed $T$ in which $v_1$ is mapped to $u_1$ and $v_2$ is mapped to $u_2$ is at most $\eta D^{m}(k-1)!^{m}$.
    As it is independent of the choice of $v_1, v_2$, the codegree of $u_1$ and $u_2$ is at most $t^2 \eta D^{m}(k-1)!^{m} \leq \mu D'$, which concludes the proof of the claim.
\end{proof}


\section{Proof of~\Cref{thm:min_deg_almost_subdivision}}\label{sec:subdivison}

\subsection{Connecting lemma}\label{sec:connection-lemmas}
We first show that the minimum degree condition implies that there are many paths between any two vertices. For the later discussion, we actually show that the same conclusion holds with a weaker minimum degree condition.

\begin{lemma}\label{lem:resorvoir}
    Let $\varepsilon>0$ be a real number, $k \geq 2$, and $n \geq 2k\varepsilon^{-1}$ be an integer.
    Let $H$ be an $n$-vertex linear $k$-graph with $\delta(H) \geq (\frac{1}{2(k-1)}+\varepsilon)n$.
    Then for every two distinct vertices $u, v \in V(H)$, there are at least $\varepsilon n/2k$ $u$--$v$ paths of length $2$ with their internal vertices pairwise disjoint.
\end{lemma}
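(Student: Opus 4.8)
The plan is to build the paths out of common neighbours of $u$ and $v$, one per common neighbour, and then greedily keep a large sub-family whose interiors are pairwise disjoint.

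First, I would convert the degree condition into a large common neighbourhood. Because $H$ is linear, the $d_H(u)$ edges through $u$ pairwise meet only in $u$, so $N_H(u)$ contains the $(k-1)d_H(u)$ vertices of these edges distinct from $u$; hence $|N_H(u)|\geq (k-1)d_H(u)\geq (k-1)\bigl(\tfrac{1}{2(k-1)}+\varepsilon\bigr)n=\bigl(\tfrac12+(k-1)\varepsilon\bigr)n$, and likewise for $v$. Since $N_H(u),N_H(v)\subseteq V(H)$ with $|V(H)|=n$, inclusion--exclusion gives $|N_H(u)\cap N_H(v)|\geq 2(k-1)\varepsilon n$. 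Let $e_{uv}$ be the edge through $u$ and $v$ if it exists (unique, by linearity), and set $S:=\bigl(N_H(u)\cap N_H(v)\bigr)\setminus\bigl(\{u,v\}\cup e_{uv}\bigr)$, so $|S|\geq 2(k-1)\varepsilon n-k$. For $w\in S$ let $e_1^w$ be the unique edge through $\{u,w\}$ and $e_2^w$ the unique edge through $\{v,w\}$; linearity and $w\notin e_{uv}$ force $e_1^w\neq e_2^w$, $e_1^w\cap e_2^w=\{w\}$, $v\notin e_1^w$ and $u\notin e_2^w$. Thus $e_1^w$ and $e_2^w$ form a linear $u$--$v$ path $P_w$ of length $2$ whose set of internal vertices is $\mathrm{int}(P_w)=(e_1^w\cup e_2^w)\setminus\{u,v\}$, a set of exactly $2k-3$ vertices.

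Next I would select the sub-family greedily. Suppose $w_1,\dots,w_s\in S$ have already been chosen so that $P_{w_1},\dots,P_{w_s}$ have pairwise disjoint interiors, with $s\leq\lceil\varepsilon n/(2k)\rceil-1$, and put $U:=\{u,v\}\cup\bigcup_{i\le s}\mathrm{int}(P_{w_i})$, so $|U|=2+s(2k-3)$. A vertex $w\in S$ can be adjoined exactly when $(e_1^w\cup e_2^w)\cap U\subseteq\{u,v\}$, and I would bound the number of ``blocked'' $w$: if $e_1^w$ meets some $z\in U\setminus\{u,v\}$, then (as $e_1^w$ also contains $u$) linearity makes $e_1^w$ the unique edge through $\{u,z\}$, so $e_1^w$ is one of at most $|U|-2$ edges and $w$ is one of its at most $k-1$ vertices other than $u$; the symmetric bound applies to $e_2^w$. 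Hence at most $2(k-1)(|U|-2)=2(k-1)s(2k-3)$ vertices of $S$ are blocked, and since $s\leq\varepsilon n/(2k)$ a short computation — reducing to $3(k-1)\varepsilon n>k^2$, which holds as $n\geq 2k\varepsilon^{-1}$ and $k\geq 2$ — shows this is strictly less than $|S|$. So an unblocked $w\in S$ remains; taking $w_{s+1}$ to be any such $w$ and iterating until $s=\lceil\varepsilon n/(2k)\rceil$ produces at least $\varepsilon n/(2k)$ linear $u$--$v$ paths of length $2$ with pairwise disjoint interiors.

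I expect the only point needing care to be this last counting step — keeping the number of blocked vertices below $|S|$ at every stage — which boils down to the elementary inequality above. (If one prefers to avoid the iteration, an equivalent route is to form the conflict graph $\Gamma$ on vertex set $S$ joining $w\sim w'$ whenever $\mathrm{int}(P_w)\cap\mathrm{int}(P_{w'})\neq\emptyset$; the same linearity argument bounds $\Delta(\Gamma)<2k(k-1)$, so $\Gamma$ has an independent set of size at least $|S|/(2k(k-1))\geq\varepsilon n/(2k)$, and the corresponding paths are as desired.)
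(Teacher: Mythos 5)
Your argument is correct and follows essentially the same route as the paper: both proofs use linearity to convert the degree hypothesis into $|N_H(u)|,|N_H(v)|>n/2$, extract length-$2$ paths from common neighbours, and keep their interiors disjoint by a greedy counting step (the paper simply re-derives a fresh common neighbour avoiding the used vertices at each step, whereas you enumerate all common neighbours first and then prune). The only slip is in your optional conflict-graph aside, where the correct bound is $\Delta(\Gamma)\leq 2(k-1)(2k-3)$ rather than $2k(k-1)$ for $k\geq 3$; this weaker bound still gives an independent set of size roughly $\varepsilon n/(2k-3)\geq \varepsilon n/(2k)$, so the conclusion is unaffected.
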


\begin{proof}
    We fix two distinct vertices $u, v \in V(H)$ and choose the collection of $u$--$v$ paths greedily as follows.
    Let $P_1, \ldots, P_t$ be the collection of $u$--$v$ paths that we have chosen and assume that $t < \varepsilon n/2k$.
    Let $X$ be the set of internal vertices of $P_1, \ldots, P_t$.
    Then $|X| \leq t(2k-3) <\varepsilon n-1$.
    Thus, we have $$d_H(u; V(H) \setminus X) > (\frac{1}{2(k-1)}+\varepsilon)n - \varepsilon n +1 = \frac{n}{2(k-1)} +1 . $$
    By the same reason, $d_{H}(v; V(H) \setminus X) > \frac{n}{2(k-1)}+1$.
    As there are at most one edge containing $\{u, v\}$, there are more than $\frac{n}{2(k-1)}$ edges that are incident to $u$ ($v$, respectively) which does not intersects $\{v\} \cup X$ ($\{u\} \cup X$, respectively). 
    Therefore, there exist two distinct edges $e_1, e_2$ in $H \setminus X$ such that $u \in e_1, v \in e_2$ and $e_1 \cap e_2$ contains a unique vertex which is not equal to $u$ and $v$.
    Then this $e_1$ and $e_2$ form a $u$--$v$ path of length $2$ where all the vertices other than $u$ and $v$ are disjoint from $X$.
\end{proof}


\subsection{Proof of \Cref{thm:min_deg_almost_subdivision}}

We now prove \Cref{thm:min_deg_almost_subdivision}. 
Indeed, we prove the following more general statement that covers both of the possibilities of $F$.

\begin{theorem}\label{thm:min_deg_embedding_general}
    Let $k \geq 2$ be an integer and $\ve, \eta>0$.
    Then there exists $n_0=n_0(k, \ve, \eta)$ and $\mu = \mu(k, \ve, \eta)>0$ such that the following is true.
    
    Let $H$ be a linear $k$-graph on $n$ vertices where $n \geq n_0$ and $\delta(H) \geq (\frac{1}{k} + \ve) n$.
    Let $F$ be a hypergraph that satisfies the following.
    \begin{enumerate}
        \item $|V(F)| \leq (1-\eta) n$.
        \item There exists a sequence of $u_i$--$v_i$ paths $P_1, \ldots, P_t$ for some $t \leq \mu n$ such that $P_i$ is a bare path of length $2$ in $F - \bigcup_{j=1}^{i-1} (\mathrm{int}(P_j))$.
        \item $F-\bigcup_{j=1}^{t} (\mathrm{int}(P_j))$ is a hyperforest with at most $\mu n$ leaf edges.
    \end{enumerate} 
    Then $H$ contains $F$ as a subgraph.
\end{theorem}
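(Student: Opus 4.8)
The plan is to peel the short bare paths $P_1,\dots,P_t$ off to expose a hyperforest, embed that hyperforest into $H$ while keeping a small linear-size pool $R$ of vertices untouched, and then reinsert $P_t,P_{t-1},\dots,P_1$ one by one, routing each through $R$.

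Write $F^{\mathrm{fin}}\defeq F-\bigcup_{j=1}^{t}\mathrm{int}(P_j)$, which by hypothesis is a hyperforest with at most $\mu n$ leaf edges. Apply \Cref{lem:leaf_path_hypertree} to $F^{\mathrm{fin}}$ with a large constant $m$ to write $F^{\mathrm{fin}}=Q_1\cup\dots\cup Q_s\cup F^{\mathrm{core}}$, where the $Q_i$ are edge-disjoint bare paths of length $m+1$ and $e(F^{\mathrm{core}})\le 6m\mu n+2e(F^{\mathrm{fin}})/(m+1)$; this is at most $\varepsilon' n$ for a small $\varepsilon'$ once $\mu\ll 1/m$ and $m$ is large. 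Fix a uniformly random reservoir $R\subseteq V(H)$ containing a suitably small constant fraction of the vertices; by \Cref{lem:chernoff} we may assume $H-R$ is a linear $k$-graph on $(1-o(1))n$ vertices with $\delta(H-R)\ge(\tfrac1k+\tfrac\varepsilon2)|V(H-R)|$, and that every vertex has roughly its expected number of edges into $R$. Embed $F^{\mathrm{core}}$ into $H-R$ greedily, adding its edges in an order in which each new edge meets the previously embedded part in exactly one vertex: that vertex has degree far exceeding the $O(k\varepsilon' n)$ vertices used so far, so it lies in an edge on $k-1$ fresh vertices, and we obtain an embedding $\varphi_0$ of $F^{\mathrm{core}}$, which in particular pins down where the two endpoints of each $Q_i$ must go.

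It remains to realize each $Q_i$ as a bare path of length $m+1$ between its two prescribed images, with pairwise disjoint interiors avoiding $R\cup\varphi_0(V(F^{\mathrm{core}}))$ except for two short segments lying in $R$. Apply \Cref{thm:tree-tiling} to $H-R-\varphi_0(V(F^{\mathrm{core}}))$ (legitimate since $\tfrac\varepsilon2 n\ge C$ for $n$ large) with $T$ the linear path of length $m-3$; because $|V(F^{\mathrm{fin}})|\le(1-\eta)n$, comparing the lengths shows the almost-perfect $T$-tiling produced contains at least $s$ pairwise disjoint copies $T_1,\dots,T_s$. For each $i$, attach $T_i$ to the two prescribed endpoints by a pair of length-$2$ bare paths whose interiors lie in the still-unused part of $R$, supplied by \Cref{lem:resorvoir}; concatenating gives a bare path of length exactly $m+1$. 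Only $O(s)=O(n/m)$ connectors are used, each consuming $O(k)$ reservoir vertices, so $R$ is never exhausted once $m$ is large, and we have embedded $F^{\mathrm{fin}}$. Finally, reinsert $P_t,\dots,P_1$: when $P_i$ is added, at most $|V(F)|+O(n/m)\le n-\Omega(\eta n)$ vertices are in use, and \Cref{lem:resorvoir}, applied inside what is left of $R$, provides a length-$2$ bare path between the images of $u_i$ and $v_i$ with fresh interior; since $t\le\mu n$ the reservoir is big enough for all of these.

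The main obstacle is routing the connectors once $H$ is almost saturated. Because $|V(F)|$ can be as large as $(1-\eta)n$ with $\eta$ small, after the bulk of $F$ is placed an arbitrary vertex of $H$ may have its entire neighbourhood inside the used set, so no connector can be found by a last-minute greedy step; the reservoir must be chosen large enough (and the tiled paths short enough) that every one of the $O(n/m)+t$ connectors is still routable through an as-yet-sparse region — this is exactly why \Cref{lem:resorvoir} is proved under the weaker hypothesis $\delta(H)\ge(\tfrac1{2(k-1)}+\varepsilon)n$ — and the delicate part of the argument is bookkeeping the hierarchy $\mu\ll 1/m\ll\varepsilon,\eta,1/k$ while checking that the relevant degrees into $R$ survive every stage.
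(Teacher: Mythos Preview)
Your proposal is correct and follows essentially the same route as the paper: reserve a random set $R$, strip the hyperforest into a small core plus long bare paths via \Cref{lem:leaf_path_hypertree}, embed the core greedily, realise the long paths by tiling $H\setminus R$ with short paths (\Cref{thm:tree-tiling}) and splicing through $R$ via \Cref{lem:resorvoir}, then reinsert $P_1,\dots,P_t$ the same way. The only cosmetic difference is that the paper uses two length scales---bare paths of length $M\approx 1/\sqrt{\mu}$ in the forest and tiling paths of length $m\approx k/\delta$ in $H$, with each long path assembled from $(M-2)/(m+2)$ short ones joined through $R$---whereas you use a single scale; both variants work once the reservoir density is chosen small relative to $\eta$.
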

Then one can deduce \Cref{thm:min_deg_almost_subdivision} from \Cref{thm:min_deg_embedding_general}.
\begin{proof}[Proof of \Cref{thm:min_deg_almost_subdivision}]
    When $F$ is a hypertree with at most $\mu n$ leaf edges, then by taking $t=0$, one can embed $F$ into $H$.
    When $F$ is a subdivision of a graph $G$, then we choose $P_i$ to be a path of length $2$ contained in each path corresponding to an edge of $G$.
    Then $t \leq {|V(G)| \choose 2} \leq \mu n$ and all of the $P_i$ are bare paths.
    Let $T$ be the graph obtained by deleting all the internal vertices of $P_i$ from $F$.
    Then it suffices to show that $T$ is a forest with at most $\mu n$ leaf edges.
    This can be proved from the following fact:
    Each component of $T$ contains exactly one center vertex $v$ of $F$ and the union of at most $\mu \sqrt{n}$ linear paths whose intersection is exactly $\{v\}$.
    Thus, $T$ is a hyperforest and each component of $T$ has at most $\mu \sqrt{n}$ leaf edges so $T$ has at most $\mu \sqrt{n} \times |V(G)| \leq \mu n$ leaf edges.
\end{proof}

Before proving \Cref{thm:min_deg_embedding_general}, we note the following lemma which can be proved by a simple greedy algorithm.
\begin{lemma}\label{lem:greedy}
    Let $H$ be a linear $k$-graph with $\delta(G) \geq m$ and $T$ be a linear $k$-forest with at most $m/k$ vertices.
    Then $H$ contains $T$ as a sub-hypergraph.
\end{lemma}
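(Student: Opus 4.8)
The statement to prove is Lemma~\ref{lem:greedy}: a linear $k$-graph $G$ with $\delta(G) \geq m$ contains any linear $k$-forest $H$ with at most $m/k$ vertices as a subgraph.

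\textbf{Approach.} The plan is a straightforward greedy embedding, processing the edges of $H$ one at a time in a BFS-type order within each tree component, so that each new edge shares at most one already-embedded vertex. First I would fix an arbitrary ordering $e_1, e_2, \ldots, e_s$ of the edges of $H$ (where $s \le (m/k-1)/(k-1) < m/k$) such that each component is listed contiguously and, within a component, the order is a BFS order rooted at some vertex; this guarantees that for each $i \ge 1$, the edge $e_i$ meets $\bigcup_{j<i} e_j$ in at most one vertex (exactly one vertex if $e_i$ is not the first edge of its component, zero vertices otherwise). Since $H$ is linear, "meets in at most one vertex" is automatic for edges in the same component once we use a tree order, and components are vertex-disjoint.

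\textbf{Key steps.} I would embed inductively. Suppose $e_1, \ldots, e_{i-1}$ have been embedded via an injection $\phi$ into $V(G)$, using fewer than $|V(H)| \le m/k$ vertices in total. For $e_i$: if $e_i$ starts a new component, pick any unused vertex of $G$ as the image of one of its vertices (possible since $|V(G)| \ge \delta(G)+1 > m/k$, assuming $G$ is nonempty — and if $m = 0$ the forest is empty and there is nothing to prove), then treat it like the case below. If $e_i$ shares a vertex $w$ with the already-embedded part, let $u = \phi(w)$. We must extend $\phi$ to the other $k-1$ vertices of $e_i$, i.e., find an edge of $G$ through $u$ whose remaining $k-1$ vertices are all outside the image $\phi(V(H) \cap (\bigcup_{j<i} e_j))$ — call this used set $U$, with $|U| < m/k$. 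Since $G$ is linear, two distinct edges through $u$ intersect only in $u$, so the edges through $u$ partition $N_G(u)$ into blocks of size $k-1$; there are $d_G(u) \ge m$ such blocks, covering $\ge m(k-1)$ neighbors... but more simply: the number of edges through $u$ that are "blocked" (meet $U \setminus \{u\}$) is at most $|U| - 1 < m/k \le d_G(u)$, since each vertex of $U\setminus\{u\}$ lies in at most one edge through $u$ by linearity. Wait — I should be careful: I want at most $m/k - 1 < m \le d_G(u)$ blocked edges, which holds comfortably, so an unblocked edge through $u$ exists. Map the remaining vertices of $e_i$ to its remaining $k-1$ vertices (any bijection). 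This keeps $\phi$ injective and maintains the invariant. After all $s$ edges, $\phi$ is the desired embedding of $H$ into $G$.

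\textbf{Main obstacle.} There is essentially no obstacle; the only thing requiring a line of care is the counting that guarantees an available edge at each step — one must use linearity of $G$ to argue that each previously-used vertex forbids at most one edge through $u$, so the number of forbidden edges is at most $|U| - 1 \le |V(H)| - 1 < m/k \le m \le d_G(u)$, leaving room. A minor bookkeeping point is setting up the edge ordering so that each edge attaches along at most one vertex; this is immediate from $H$ being a linear forest (take a rooted spanning structure of each tree component and order edges by distance from the root). One should also note the degenerate cases $m = 0$ (empty forest) and $k$-graphs with a single edge, which are trivial.
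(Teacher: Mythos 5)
Your proof is correct and is exactly the ``simple greedy algorithm'' the paper alludes to (the paper states \Cref{lem:greedy} without proof): order the edges of each tree component by BFS so each new edge attaches at a single embedded vertex $u$, and use linearity of $G$ to see that each previously used vertex forbids at most one of the $d_G(u)\geq m$ edges through $u$, leaving an available edge since fewer than $m/k$ vertices are ever used. No issues.
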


We now prove \Cref{thm:min_deg_embedding_general}.
\begin{proof}[Proof of Theorem~\ref{thm:min_deg_embedding_general}]

    We take parameters so that $$0<\frac{1}{n} \ll \mu \ll \delta \ll \delta' \ll \ve , \eta, \frac{1}{k}.$$
    Let $F$ be a graph which satisfies the conditions of \Cref{thm:min_deg_embedding_general} with $P_i$s and $T = F -\bigcup_{j=1}^{t} (\mathrm{int}(P_j))$.
    Let $m = \lceil 10k/\delta \rceil$ and $M$ be the smallest integer which is at least $1/\sqrt{\mu}$ and $M-2$ is divisible by $m+2$.
    By \Cref{lem:leaf_path_hypertree}, $T$ has edge-disjoint semi-bare paths $P_1', \ldots, P_s'$ of length $M+2$ such that 
    $$|E(T-P_1'-\cdots-P_s')| \leq 6(M+2) \cdot \mu n + \frac{2e(T)}{M+3} \leq 10 \sqrt{\mu} n.$$
    We note that $s M \leq n$ so $s \leq \sqrt{\mu} n$.
    Let $s^1_i, s^2_i \in V(P_i')$ be the vertices that are contained in the first and last edge of $P_i'$ and have degree $2$ in $P_i'$.
    Let $Q_i$ be the $s^1_i$--$s^2_i$ path obtained by deleting the first and last edge of $P_i'$, of length $M$.
    Then by choice, $Q_i$ is a bare path of $T$ for each $i \in [s]$ and they are pairwise vertex-disjoint.
    Let $T'$ be the graph obtained by deleting all the internal vertices of $Q_1, \ldots, Q_s$.
    We note that 
    $$|E(T')| \leq |E(T-P_1'-\cdots-P_s')| + 2s \leq 12\sqrt{\mu} n.$$
    We now aim to embed $T'$ and extend this embedding to an embedding of $T$ and $F$. 
    To achieve this, we claim that $H$ contains an appropriate reservoir.
    \begin{claim}\label{clm:resorvior}
        There is a subset $R \subseteq V(H)$ that satisfies the followings with $H' = H\setminus R$ and $n' = |V(H')|$.
        \begin{enumerate}[(i)]
            \item $n' \geq (1-\delta')n$.\label{cond:min_deg}
            \item $\delta(H') \geq (\frac{1}{k}+\frac{\ve}{2})n'$. 
            \item For every pair of distinct vertices $u, v$, there are at least $\delta n$ internally disjoint paths of length two connecting $u$ and $v$ with all vertices other than $u$ and $v$ are in $R$.\label{cond:resorvoir}
        \end{enumerate}
    \end{claim}

    \begin{claimproof}[Proof of \Cref{clm:resorvior}]
        We choose $R\subseteq V(H)$ to be the random subset where each vertex is chosen with probability $p=\delta'/2$ independently, and we will prove that with positive probability, this random set $R$ is the desired subset.

         By Chernoff bound (\Cref{lem:chernoff}), $|R| \geq \frac{3}{4}\delta' n$ with probability at most 
        $$ 2\exp{\left(-\frac{\delta' n}{12}\right)} = o(1) $$
        so the first condition holds with a probability of at least $3/4$.
        Similarly, for each vertex $v \in V(H)$, we have 
        $$\mathbb{P}\left(d_{H'}(v) < (1-p)^{k-1} \left(\left(\frac{1}{k}+\ve\right)n \cdot (1-\delta')\right) \leq 2\exp{\left(-\delta'^2 (1-p)^{k-1}\left(\frac{1}{k}+\ve\right)\frac{n}{3}\right)}\right) = o\left(\frac{1}{n}\right).$$
        Thus, by the union bound and the fact that $(1-p)^{k-1} (\frac{1}{k}+\ve)n \cdot (1-\delta') \leq (\frac{1}{k}+\frac{\ve}{2})n'$, the second condition holds with probability at least $3/4$.
        Finally, for every pair of distinct vertices $u, v$, \Cref{lem:resorvoir} implies that there are at least $\delta' n$ paths connecting $u$ and $v$ with pairwise disjoint internal vertices.
        The events that internal vertices of each of those paths are in $R$ are mutually independent.
        Thus, the probability that there are less than $p^{k-1} \delta' n/2 \leq \delta n$ paths with all internal vertices in $R$ is at most
        $$ 2\exp(-p^{k-1} \delta' n/12) = o(1/n^2)$$
        by \Cref{lem:chernoff}.
        Thus, by the union bound, the third condition holds with probability at least $3/4$.
        Therefore, $R$ satisfies \ref{cond:min_deg}--\ref{cond:resorvoir} with probability at least $1/4$.
    \end{claimproof}

    We now fix a choice of $R$ that satisfies \Cref{clm:resorvior}.
    As $T'$ has at most $20\sqrt{\mu} n$ edges, it has at most $20k \sqrt{\mu} n$ non-isolated vertices.
    By \Cref{lem:greedy}, one can greedily embed $T'$ into $H'$ except isolated vertices.
    Let $X \subseteq V(H')$ be the image of $V(T')$ of this embedding and let $H'' = H' \setminus X$ and $n'' = |V(H'')|$.
    As $20k\sqrt{\mu} n \leq \ve n/10$, we have $\delta(H'') \geq (\frac{1}{k}+\frac{\ve}{4})n''$.
    Then by applying \Cref{thm:tree-tiling} to $H''$, we conclude that $H''$ has vertex disjoint paths of length $m$ that cover all but at most $\delta n''$ vertices.
    
    We partition the set of paths of length $m$ into sets $\mathcal{S}_1, \mathcal{S}_2, \ldots, \mathcal{S}_s$ of $(M-2)/(m+2)$ paths (if there is some leftover, we discard them).
    We note that $|V(T)| \leq (1-\eta) n$ and $Q_i$s are pairwise vertex-disjoint so $s \times (M(k-1)+1) \leq (1-\eta) n$. 
    Thus, $s \times \frac{M-2}{m+2} \leq \frac{(1-\eta)n}{(m+2)(k-1)}$.
    As the number of paths of length $m$ we obtained is at least $\frac{(1-\delta')n''}{m(k-1)+1} \geq \frac{(1-\eta)n}{(m+2)(k-1)}$, one can find $\mathcal{S}_1, \mathcal{S}_2, \ldots, \mathcal{S}_s$.
    Let $\mathcal{S}_j = \{S^j_1, \ldots, S^j_{(M-2)/(m+2)}\}$ and assume that each $S^j_i$ is an ${u^j_i}$--${v^j_i}$ path.

    Then by \ref{cond:resorvoir}, there exists at least $\delta n$ paths of length $2$ connecting ${v^j_{i+1}}$ and ${u^j_i}$ such that all internal vertices are in $R$ for each $i \in [(M-2)/(m+2)-1]$.
    We choose one of those paths for every $j \in [s]$ and $i \in [(M-2)/(m+2)-1]$ so that they are pairwise vertex-disjoint.
    As the total number of paths is at most $n/m \leq \delta n/10 k$, the number of vertices that are used is at most $\delta n/5$ so this selection can be done greedily.
    We now connect all the paths in each $\mathcal{S}_j$ by using the previously chosen paths.
    As a result, we have $s$ paths of length $M-4$ which are pairwise vertex-disjoint.
    Let $X'$ be the set of vertices that are in these paths of length $M-4$.
    We also note that this collection of paths uses at most $\delta n/5$ vertices in $R$.

    We now embed isolated vertices of $T'$ by any vertices in $V(H) \setminus (R \cup X \cup X')$. 
    As $|V(T)| \leq (1-\eta)n < n-|R|$, this can be done greedily. 
    We now embed paths $Q_i$ into $H$ one by one by selecting one path of length $M-4$ which has not been used before and connects each end of the path and the embedded image of $s_i^1$ and $s_i^2$, respectively by using \ref{cond:resorvoir}.
    We can see that in any step, the number of vertices used in $R$ is at most $\delta n/5 + 2ks \leq \delta n/4$.
    Thus, for each step, there are at least $3\delta n/4$ paths between any two vertices such that all the internal vertices are in $R$ and do not contain previously used vertices of $R$.
    This finishes the embedding of $T$.

    To finish the embedding of $H$, we need to embed $P_1, \ldots, P_t$.
    Observe that one can greedily select pairwise disjoint length two paths between the images of $u_i$ and $v_i$ for each $i \in [t]$ such that all the internal vertices are in $R$ and disjoint from all the previous embedding.
    As the total number of used vertices of $R$ is at most $\delta n/4 + 2kt \leq \delta/2$, by \ref{cond:resorvoir}, there are at least $\delta n/2$ paths between any two vertices such that all the internal vertices are contained in $R$ and do not contain previously used vertices of $R$.
    Together with these paths, one can embed all of the $P_1, \ldots, P_t$. 
    This gives an embedding of $H$ which concludes the proof.
\end{proof}


\section{Concluding remarks}\label{sec:concluding_remarks}

We finally note that with a slightly stronger minimum degree condition, one can prove a robust version of \Cref{thm:min_deg_almost_matching} by following the simplified idea of~\cite{Kelly2024}.
Let $\ve, \ve'>0$ be a given constants and $n$ be a sufficiently large integer.
If a linear hypergraph $G$ satisfies $\delta(H) \geq (\frac{1}{k}+\ve)n$, then \Cref{lem:make-pseudorandom} implies the existence of a $(D, 0, \frac{\log^2 n}{n})$-pseudorandom multi-hypergraph $F$ whose simplification is contained in $H$.
Then for $p =  \frac{(\log n)^c}{n^{1/(k-1)}}$, a $p$-random subset $X \subseteq V(F)$, with high probability satisfies that $F[X]$ is an almost regular graph with small maximum codegree condition. 
Thus, by partitioning $V(H)$ into $V_1, \ldots, V_k$ with $k=1/p$ by selecting each element uniformly at random, each $H[V_i]$ has an almost perfect matching.
By taking their union, we obtain an almost perfect matching in $H$. 
Furthermore, for any edge $e \in E(H)$, the probability that $e$ is contained in the matching is bounded by $\mathbb{P}(e \subseteq V_i \text{ for some $i$}) = p^{k-1}$.  
We can apply the same argument for any set of pairwise disjoint edges $e_1, \ldots, e_t$ so this gives a $\frac{(\log n)^{c'}}{n}$-spread probability measure on a set of matchings with $(1-\ve')n/k$ edges for some constant $c'>0$.
As a corollary, there are $\exp(\frac{1-\varepsilon'}{k}n - O(n \log \log n))$ matchings of size $(1-\varepsilon')n/k$.
Also, together with a theorem by Frankston, Kahn, Narayanan, and Park~\cite{Frankston2021}, we conclude the following.
\begin{corollary}
    There exists a constant $c>0$ such that the following holds.
    Let $H$ be an $n$-vertex $k$-graph.
    If $p \geq \frac{(\log n)^c}{n}$ and $\delta(H) \geq (\frac{1}{k}+\varepsilon)n$, then $p$-random subgraph of $G$ contains a matching of size $(1-\varepsilon)n/k$ with high probability.
\end{corollary}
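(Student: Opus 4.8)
The plan is to reduce the corollary to the construction of a \emph{spread} probability distribution on matchings and then invoke the theorem of Frankston, Kahn, Narayanan and Park~\cite{Frankston2021}. Recall that a probability measure $\nu$ on subsets of a finite set $X$ is \emph{$q$-spread} if $\mathbb{P}_{Z\sim\nu}[S\subseteq Z]\le q^{|S|}$ for every $S\subseteq X$, and that~\cite{Frankston2021} asserts: if an increasing family $\mathcal{F}$ on $X$ admits a $q$-spread measure supported on sets of size at most $\ell$, then a $p$-random subset of $X$ lies in $\mathcal{F}$ with high probability whenever $p\ge Kq\log\ell$ for an absolute constant $K$. Applying this with $X=E(G)$ and $\mathcal{F}$ the increasing family of subgraphs of $G$ that contain a matching with $(1-\varepsilon)n/k$ edges, and noting $\ell=(1-\varepsilon)n/k=O(n)$, it suffices to exhibit, for some $c_0=c_0(k)$, an $O((\log n)^{c_0}/n)$-spread distribution on matchings of $G$ of size exactly $(1-\varepsilon)n/k$: then $Kq\log\ell=O((\log n)^{c_0+1}/n)$ and the corollary follows with any $c\ge c_0+2$.

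To build such a distribution I would first regularize $G$. Since $\delta(G)\ge(\tfrac1k+\varepsilon)n$ exceeds $\tfrac nk+C$ for every $C\le\varepsilon n$, \Cref{thm:fractional_matching} gives a perfect fractional matching of $G\setminus F$ for every $F\subseteq G$ with $\Delta(F)\le C$; hence \Cref{lem:make-pseudorandom} (with degree parameter $n/k$, codegree parameter $k/n$, $\eta=\log^2 n/n$, a suitable $\gamma$, and $C=\Theta(n/\log n)$) yields an $\eta$-pseudorandom perfect fractional matching $\varphi$, which moreover satisfies $\varphi(e)\le\eta$ for every edge (apply the defining inequality to a pair of vertices inside $e$). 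Scaling $\varphi$ by a fixed large power $D=n^{A}$ and rounding to integers produces a multi-$k$-graph $H$ with $H_{simp}\subseteq G$, $\delta(H),\Delta(H)=(1+o(1))D$ and $\Delta_2(H)=O(D\log^2 n/n)$. Now partition $V(G)$ uniformly at random into $r=\lceil n^{1/(k-1)}/(\log n)^{c_1}\rceil$ parts $V_1,\dots,V_r$ of size $s=(1+o(1))n/r$. Since restricting only decreases codegrees, $\Delta_2(H[V_i])\le\Delta_2(H)$, and by the Chernoff bound (\Cref{lem:chernoff}) and a union bound over vertices and parts, with positive probability every $H[V_i]$ has all degrees $(1+o(1))D_i$ where $D_i=(1+o(1))(s/n)^{k-1}D\to\infty$; the key point is that then the codegree-to-degree ratio of $H[V_i]$ is at most $O(\eta(n/s)^{k-1})=O((\log n)^{2-c_1(k-1)})=o(1)$, which is exactly why $s$ must be taken of order $n^{(k-2)/(k-1)}$ up to polylog factors. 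Fix such a partition; in each part draw a near-perfect matching $M_i$ of $H[V_i]$ (covering all but $o(|V_i|)$ vertices, which exists by \Cref{thm:pippenger_spencer}) from a distribution that is itself $O(1/D_i)$-spread on the matchings of $H[V_i]$ — such a distribution can be produced by a random-greedy (R\"{o}dl-nibble) construction — and let $M$ be a uniformly random submatching of $\bigcup_i M_i$ with exactly $(1-\varepsilon)n/k$ edges; this exists because $|\bigcup_i M_i|\ge(1-o(1))n/k$, and $M\subseteq H_{simp}\subseteq G$ since a matching never uses two parallel edges.

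It remains to check that this $M$ is $O((\log n)^{c_0}/n)$-spread. Fix a partial matching $S\subseteq E(G)$ of size $j$ (for non-matchings the probability vanishes) and decompose $\mathbb{P}[S\subseteq M]$ over the ways the $j$ edges of $S$ distribute among the parts. When they fall into distinct parts, each is monochromatic with probability $\approx r^{-(k-1)}$ and then lies in its part's matching with probability $O(1/D_i)=O(r^{k-1}/D)$, so these configurations contribute $\approx(O(1/D))^{j}$. When several edges share a part one invokes the $O(1/D_i)$-spreadness inside that part; because $r$ is a fixed positive power of $n$, $D$ is a sufficiently large power of $n$, and the concentration errors are only $n^{-1+o(1)}$, every such configuration — and the sum over all ways of grouping $S$'s edges among parts — remains at most $(O((\log n)^{c_0}/n))^{j}$. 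Restricting $M$ to a fixed number of edges only lowers these probabilities. The hard part of the argument is precisely this estimate, with two sources of difficulty: one must secure a within-part distribution on near-perfect matchings that is $O(1/D_i)$-spread — which does \emph{not} follow from \Cref{thm:pippenger_spencer} alone, since a single uniformly random colour class of a proper edge-colouring is far from spread, but does follow from a R\"{o}dl-nibble analysis — and one must control the combinatorial sum over the ways a queried matching can concentrate inside individual parts, and it is this second point that pins the partition to the $n^{1/(k-1)}$ scale and forces $D$ to be a sufficiently large polynomial in $n$.
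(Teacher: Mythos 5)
Your proposal is correct and follows essentially the same route as the paper's (sketched) argument: regularize via \Cref{lem:make-pseudorandom} into an almost-regular multihypergraph with codegrees $O(D\log^2 n/n)$, randomly partition $V(G)$ into roughly $n^{1/(k-1)}/\mathrm{polylog}(n)$ parts of size $n^{(k-2)/(k-1)}\mathrm{polylog}(n)$, take near-perfect matchings inside each part via \Cref{thm:pippenger_spencer}, check $\mathrm{polylog}(n)/n$-spreadness of the resulting random matching, and finish with the Frankston--Kahn--Narayanan--Park theorem. The one point where you diverge --- demanding an $O(1/D_i)$-spread distribution on near-perfect matchings within each part --- is a correct but unnecessary complication: since $S\subseteq M$ forces every edge of the matching $S$ to be monochromatic, an event of probability at most $\left(O(r^{1-k})\right)^{|S|}=\left(\mathrm{polylog}(n)/n\right)^{|S|}$ already, one may bound the remaining conditional probability by $1$ and let each part's near-perfect matching be an arbitrary (even deterministic) function of the partition.
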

We conjecture that the power $c$ of the $\log n$ can be improved to $1$.

Another question we want to note is whether the minimum degree condition of \Cref{thm:min_deg_almost_subdivision} can be improved for specific hypergraphs. 
In particular, it is interesting to decide the minimum degree threshold for the existence of an almost spanning linear cycle in a linear hypergraph. As we saw in \Cref{prop:lower-bound}, the minimum degree condition is tight for linear hypergraphs containing a near-perfect matching. However, for other linear hypergraphs, it is not clear what would be the correct threshold.

We note that our proof yields the following two relevant facts.
First, if a linear hypergraph $H$ is regular, then we can find an almost spanning linear cycle with a weaker condition $\delta(H) \geq (\frac{1}{2k-2} + o(1))n$.
This can be proved by applying the proof of \Cref{thm:min_deg_embedding_general} to $H$ directly without applying \Cref{cor:regularization}.
This bound is asymptotically best possible by considering the case when $H$ is a disjoint union of two (almost-) Steiner systems of almost equal order.
Second, a similar technique stated in the above paragraph proves that there are at least $\exp(\frac{(1 - o(1))n \log n}{k-1}- o(n \log n))$ linear cycles with $(1 - o(1))n$ vertices in $H$ when $\delta(H) \geq (\frac{1}{k} + o(1))n$.

\subsection*{Acknowledgement}
The authors would like to thank their advisor Jaehoon Kim for his helpful advice to improve the presentation of this paper.

SI and HL are supported by the National Research Foundation of Korea (NRF) grant funded by the Korea government(MSIT) No. RS-2023-00210430 and by the Institute for Basic Science (IBS-R029-C4).

\end{document}